\numberwithin{equation}{section}  %%% equations numbers (A.B)
\DeclareMathAlphabet{\curly}{U}{rsfs}{m}{n}  %% curly font
\theoremstyle{remark}
\theoremstyle{plain}
\newtheorem{prop}{Proposition}
\newtheorem{lem}{Lemma}[section]
\newtheorem{thm}{Theorem}
\newtheorem{cor}{Corollary}
\newtheorem{conjecture}{Conjecture}
\numberwithin{equation}{section}
\newcommand{\ZZ}{{\mathbb Z}}
\newcommand{\QQ}{{\mathbb Q}}
\newcommand{\NN}{{\mathbb N}}
\renewcommand{\pmod}[1]{\allowbreak\mkern7mu({\operator@font mod}\,\,#1)}
\newcommand{\bal}{\[\begin{aligned}}
\newcommand{\eal}{\end{aligned}\]}
\newcommand{\be}{\begin{equation}}
\newcommand{\ee}{\end{equation}}
\newcommand{\ssum}[1]{\sum_{\substack{#1}}}  %%% stacked sum
\newcommand{\sprod}[1]{\prod_{\substack{#1}}}  %%% stacked product
\newcommand{\lam}{\ensuremath{\lambda}}
\newcommand{\del}{\ensuremath{\delta}}
\newcommand{\eps}{\ensuremath{\varepsilon}}
\newcommand{\er}{\text{e}}
\renewcommand{\le}{\leqslant}
\renewcommand{\ge}{\geqslant}
\renewcommand{\(}{\left(}
\renewcommand{\)}{\right)}
\newcommand{\ds}{\displaystyle}
\newcommand{\pfrac}[2]{\left(\frac{#1}{#2}\right)}  %%% frac with
\newcommand{\PP}{\mathcal{P}}
\newcommand{\pz}{\ensuremath{p_0^{\phantom{1}}}}  %%% nicely formatted p_0
\newcommand{\qz}{\ensuremath{q_0^{\phantom{1}}}}  %%% nicely formatted q_0
\newcommand{\qone}{\ensuremath{q_1^{\phantom{1}}}}  
\newcommand{\pone}{\ensuremath{p_1^{\phantom{1}}}}  
\newcommand{\mone}{\ensuremath{m_1^{\phantom{1}}}}  
\newcommand{\aone}{\ensuremath{a_1^{\phantom{1}}}}  
\newcommand{\az}{\ensuremath{a_0^{\phantom{1}}}}  
\begin{document}

\title{Sieving very thin sets of primes, and Pratt trees with missing primes}
%\title{On certain recursively defined sets of primes}
\author{Kevin Ford}
\date{\today}
\address{Department of Mathematics, 1409 West Green Street, University
of Illinois at Urbana-Champaign, Urbana, IL 61801, USA}
\email{ford@math.uiuc.edu}
\begin{abstract}
Suppose $\PP$ is a set of primes, such that for every $p\in\PP$, every prime factor of $p-1$ is
also in $\PP$.  We apply a new sieve method to 
show that either $\PP$ contains all of the primes or the counting function of $\PP$ 
is $O(x^{1-c})$ for some $c>0$, where $c$ depends only on the smallest  prime not in $\PP$.
Our proof makes use of results connected with Artin's primitive root conjecture.
\end{abstract}

\dedicatory{Dedicated to the memory of Paul T. Bateman}

\maketitle

%%%%%%%%%%%%%%%%%%%%%%%%%

\section{Introduction}

Consider a set $\PP$ of primes satisfying the condition:
\be\label{divisorclosed}
p\in \PP \, \implies \, \forall q|(p-1), q\in \PP.
\ee
Here and throughout, the letters $p$, $q$ and $r$ denote primes.  
Trivial examples of sets $\PP$ are the empty set and the set of all primes.

We are concerned in this note with nontrivial examples, nonempty $\PP$ 
omitting at least one prime (since $2\not\in \PP$ implies that $\PP$ is empty, 
the smallest omitted prime must be odd).  Let $\pz$ denote the smallest prime \emph{not} in $\PP$
and let $P(x)=\# \{p\in \PP : p\le x\}$ be the associated counting
function.  Our main result is the following.

\begin{thm}\label{mainthm}
Let $\PP$ be a set of primes satisfying \eqref{divisorclosed} that does not contain the prime $\pz$.  
There are constants $\del>0$ and $c>0$, depending only on $\pz$, such that $P(x)\le c x^{1-\del}$.
\end{thm}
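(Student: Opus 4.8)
The plan is to reduce to the extremal case and then run an upper‑bound sieve \emph{recursively} through the Pratt tree of a prime $p\in\PP$, so that a fixed gain at each branching compounds, over the $\asymp\log\log x/\log\log\log x$ levels of a typical tree, into a genuine power saving.

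\emph{Reduction.} Among all sets satisfying \eqref{divisorclosed} that omit $\pz$ there is a largest one: writing $T(p)$ for the Pratt tree of $p$ (root $p$, children the distinct prime divisors of $p-1$, continued recursively, with each $2$ a leaf), put $\PP_{\max}=\{p:\pz\notin T(p)\}$; a strong induction on $p$ shows every admissible $\PP$ lies in $\PP_{\max}$, so it suffices to bound $\#\{p\le x:\pz\notin T(p)\}$. As a warm‑up, if $p\in\PP$ then $p-1$ has no prime divisor $q\equiv1\pmod{\pz}$ (such a $q$ would lie in $\PP$, forcing $\pz\mid q-1$ and hence $\pz\in\PP$), so a routine upper‑bound sieve on $\{p-1:p\le x\}$, removing the primes $\equiv1\pmod{\pz}$ up to $x^{1/2-\eps}$ via Bombieri--Vinogradov, gives $P(x)\ll x/(\log x)^{1+1/(\pz-1)}$. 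This exploits only the top level of $T(p)$; the power saving must come from pushing the constraint through every level.

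\emph{The recursive sieve.} For $p\in\PP$, $p\le x$, write $p-1=\prod_i q_i^{a_i}$ with all $q_i\in\PP$; typically $\asymp\log\log x$ of the $q_i$, and each $q_i\in\PP$ means $q_i-1$ is again composed of primes in $\PP$, and so on. I would formulate a sieve upper bound for the event ``every prime factor of $m$ lies in $\PP$'' and apply it range by range down the tree: one application turns a bound of the form $P(t)\le\rho(t)\pi(t)$, valid at scales slightly below the current one, into a bound where $\rho$ has been replaced by (roughly) a $\log\log$‑th power of itself, because membership of a node in $\PP$ is the conjunction of the $\asymp\log\log$ independent‑looking events that its prime divisors lie in $\PP$. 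Iterating this through the $\asymp\log\log x/\log\log\log x$ levels of a typical tree converts the per‑branch factor into a true power, $P(x)\le cx^{1-\del}$ with $\del>0$ depending only on $\pz$ (it comes out of the ``base rate'' $(\pz-2)/(\pz-1)$ of primes $\not\equiv1\pmod{\pz}$). One finishes either by strong induction on $x$, absorbing small $p$ into the constant, or by fixing the optimal depth $k=k(x)$ at the outset and bounding the probability that the top $k$ levels of $T(p)$ avoid $\pz$ directly.

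\emph{The main obstacle, and the Artin input.} The heuristic fails exactly for primes with an \emph{atypical, shallow} tree --- chiefly primes $\ell$ for which $\ell-1$ is unusually smooth, so $T(\ell)$ fails to branch and nothing compounds, and more generally those for which $\ell-1$ has very few, or only very small, prime divisors. Bounding the contribution of such $\ell$ is exactly where results connected with Artin's primitive root conjecture are needed: one wants strong upper bounds for $\#\{\ell\le y:P^+(\ell-1)\le y^{1/u}\}$ (primes $\ell$ with $(\ZZ/\ell\ZZ)^\times$ smooth), and more generally Chebotarev‑type counts of primes $\ell$ constrained simultaneously by residues modulo $\pz$ and by the factorization shape of $\ell-1$. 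I expect this ``smooth shell'', together with arranging the self‑consistency of the recursion (the input bound used at level $j$ must be the one already proved at the scale of level $j+1$) and the uniform treatment of exceptional moduli in Bombieri--Vinogradov, to be where the genuine work lies; the rest is dyadic bookkeeping.
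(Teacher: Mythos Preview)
Your instinct to recurse through the Pratt tree is right, and the warm-up bound matches Proposition~\ref{simplesieve}. But the core mechanism you propose --- that one pass of the sieve upgrades a density $\rho$ to roughly $\rho^{\log\log}$ because ``membership in $\PP$ is the conjunction of $\asymp\log\log$ independent-looking events'' --- does not work. The $\sim\log\log p$ prime factors of $p-1$ are not at comparable scales: almost all are tiny, and typically only the largest one, $q$, carries real information. Descending the tree is therefore following a \emph{chain} $p=q_0\to q_1\to\cdots$, not a bushy tree, and nothing compounds across siblings. The paper formalizes exactly this: at each level it peels off the unique large prime factor, writing $q_{i-1}-1=m_iq_i$ with $m_i$ composed only of ``small'' primes in $\PP$, and the whole question becomes whether the sums over the $m_i$ contract. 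This is encoded as a finite transition matrix $M=M_{s,j,Q}$ on residue classes modulo a fixed $Q$ (Lemma~\ref{V2}); the power saving is equivalent to $M$ having spectral radius $<1$ (Lemma~\ref{iterlem}). Note also that your proposed self-improving induction is circular in the naive form: bounding $\sum_m \#\{q\in\PP:mq+1\le x\text{ prime}\}$ by $x^{1-\del}$ already requires $\sum_{m\in T_\infty} m^{-(1-\del)}<\infty$, which is the statement to be proved.

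Crucially, the column sums of $M$ are \emph{not} automatically below $1$ --- the crude bound \eqref{crude} lands just above $1$ --- so there is no free ``base rate $(p_0-2)/(p_0-1)$'' saving per level of the kind you posit. The contraction has to be manufactured, and this is where Artin enters, in a way quite different from your smooth-shell picture. To push $C_b(M)$ below $1$ one must, for each residue $b$, exhibit an admissible cofactor $m$ that is \emph{excluded} by the condition $m\not\equiv b'\pmod r$ for some prime $r\notin\PP$; in the worst case ($d=(b-1,Q)=2$) the only small candidates are powers of $2$, so one needs $2$ to be a primitive root modulo some $r\notin\PP$. Unconditionally the paper invokes Gupta--Murty (Lemma~\ref{GuptaMurty}: for $\gg x/\log^2 x$ primes $r\le x$, the group $(\ZZ/r\ZZ)^*$ is generated by $2,3,5$), combines it with $P(x)\ll x/(\log x)^{5/2}$ (Lemma~\ref{sieve3}) to guarantee such an $r\notin\PP$, and then bootstraps via Lemma~\ref{CbMk} to obtain $C(M^3)<1$. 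Your identification of the Artin input as upper bounds for $\#\{\ell\le y:P^+(\ell-1)\le y^{1/u}\}$, and the appeals to Bombieri--Vinogradov, are off target; neither plays any role.
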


Theorem \ref{mainthm} implies that either $\PP$ is the set of all primes
or $\PP$ is a very ``thin'' set of primes.  The elements of $\PP$ have the property that
for every prime $p\not\in\PP$, $\PP$ omits the residue classes $0,1 \mod p$.
Standard application of sieve methods produce only the much weaker bound $P(x)\ll x/\log^2 x$ 
(see Proposition \ref{simplesieve} below).  The weakness stems from the fact that sieve methods
ignore congruential restrictions for ``large'' primes (i.e., those primes $>\sqrt{x}$, when 
bounding the number of elements of a set that are $\le x$).  With our new method, we are able
exploit these large prime restrictions.

To the author's knowledge, sets of primes satisfying \eqref{divisorclosed} were first considered by
R. D. Carmichael \cite{C1,C2} in his work on the conjecture that now bears his name.
Here $\phi$ is Euler's ``totient'' function.

\begin{conjecture}[Carmichael's Conjecture]
 For every positive integer $a$, there is an positive integer $b\ne a$ such that $\phi(b)=\phi(a)$.
\end{conjecture}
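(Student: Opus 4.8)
\emph{Carmichael's conjecture is a famous open problem; what follows is therefore a plan for the standard line of attack, in which divisor-closed sets of primes — and hence results of the type of Theorem~\ref{mainthm} — enter, together with an honest account of where the plan stalls.}

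I would argue by contradiction. Suppose some $a$ is \emph{solitary}, meaning that $\phi(x)=n$ has the unique solution $x=a$, where $n:=\phi(a)$. The first step is to extract elementary constraints on $a$ from solitariness via the ``swap'' idea: if a prime-power factor $q^e\,\|\,a$ can be removed and replaced by a coprime prime-power $s^f$ with $\phi(s^f)=\phi(q^e)$, one gets $b\neq a$ with $\phi(b)=n$. The easiest instances (odd $a$; $a\equiv 2\pmod 4$; $4\,\|\,a$; and so on) already force, for example, $8\mid a$ and $3\mid a$, and a more careful iteration (Carmichael, Klee) forces $a$ to be divisible by a fixed large integer and constrains the exponents to which small primes may occur.

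The central step is the observation, due in essence to Pomerance, that these constraints \emph{propagate} along the relation ``$q\mapsto$ the set of primes dividing $q-1$'': whether a prime $q$ is forced to divide $a$, and with what exponent, is decided by examining the factorization of $q-1$, so the set of primes that is ``relevant'' to the factorizations of $a$ and of $n$ is forced to be closed under the operation appearing in \eqref{divisorclosed}. Quantifying this, a solitary $a$ should manufacture a divisor-closed set $\PP$ of primes whose counting function is \emph{not} $O(x^{1-\del})$ for any fixed $\del>0$; indeed Pomerance's analysis already yields enough such primes to drive the known astronomical lower bounds on the size of a hypothetical counterexample.

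With such a $\PP$ in hand, Theorem~\ref{mainthm} applies in contrapositive form: a divisor-closed set whose counting function is not $O(x^{1-\del})$ cannot omit a smallest prime $\pz$, hence cannot omit any prime, hence is the set of \emph{all} primes. One would then hope to finish by a last swap — now that arbitrarily large primes are available as replacement factors — producing $b\neq a$ with $\phi(b)=\phi(a)$.

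The main obstacle is precisely this final implication, and it is essentially the entire difficulty of the conjecture: Theorem~\ref{mainthm} excludes a \emph{thin} exceptional set, but there is no known argument — sieve-theoretic or combinatorial — forbidding a solitary $a$ from being attached to the full set of all primes, since the elementary swaps need not produce a second solution even when every prime is available as a replacement factor. Thus the plan, even granting Theorem~\ref{mainthm} at full strength, delivers only a strengthening of the known fact that any counterexample must be astronomically large, not a proof that none exists; closing the gap would require a genuinely new idea beyond both the sieve and the exponent-swapping bookkeeping.
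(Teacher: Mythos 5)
This statement is a conjecture, not a theorem of the paper: the paper explicitly says it remains open, so there is no ``paper's own proof'' to compare against, and you are right to say so up front.

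That said, your sketch of how Theorem~\ref{mainthm} would bear on the conjecture is inverted relative to what the paper actually does. You propose that a solitary $a$ would manufacture a divisor-closed set $\PP$ that is \emph{thick} (counting function not $O(x^{1-\del})$), and then invoke Theorem~\ref{mainthm} contrapositively to conclude $\PP$ is the set of all primes. The paper's route is the opposite. Starting from Carmichael's divisibility criterion \eqref{carmichael}, one shows that a counterexample $a$ must be divisible by every prime in one of two explicit divisor-closed sets $\PP'$, $\PP''$, both of which omit $5$ and hence, by Theorem~\ref{mainthm}, are \emph{thin}, $P(x)\ll x^{1-c}$. Carmichael's conjecture would then follow if $\PP'$ and $\PP''$ were shown to be \emph{infinite}, since a fixed integer $a$ cannot have infinitely many prime divisors. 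The genuine obstruction is therefore a lower bound problem for a very thin set --- precisely what the paper flags as ``intractable with existing methods'' in its discussion of the infinitude of $\PP_r$. Your account of where the program stalls (``no known argument forbids a solitary $a$ from being attached to the full set of all primes'') is a red herring created by running the implication backwards; the actual bottleneck is proving that specific thin divisor-closed sets contain infinitely many primes, e.g.\ producing infinitely many primes of the form $2^a3^b+1$ with the $\PP'$, $\PP''$ side conditions.
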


The conjecture remains open, although the smallest counterexample $a$, if there is one, is known to
exceed $10^{10^{10}}$ \cite{Ford98}.  
Assuming a counterexample $a$ exists, Carmichael \cite{C2} attacked the problem with the 
following simple result.
\be\label{carmichael}
 \text{If } d\prod_{p|d} p \text{ divides } a \text{ and } p=1+d \text{ is prime, then } p^2|a.
\ee

Applying \eqref{carmichael} successively with $d=1, 2, 6$ and $42$,
it follows immediately that $2^2 3^2 7^2 43^2 | a$.  From here, Carmichael considers two cases: (i)
$3^2 \| a$, which easily implies $13^2|a$, and (ii) $3^3|a$, which implies
by \eqref{carmichael} that $19^2|a$.  In each case, one can use \eqref{carmichael}
to produce many more primes which divide $a$.  More precisely, in case (i), $a$ must be divisible by
all primes in $\PP'$, where $\PP'$ contains $2,3,7,13,43$ and for other primes $p$, $p\in\PP'$ if and only
if $p-1$ is squarefree and for every $q|(p-1)$, $q\in \PP'$.  Then 
$\PP'=\{2, 3, 7, 13, 43, 79, 547, 3319, 6163,\ldots\}$.
Similarly, in case (ii), $a$ is divisible by every prime in $\PP''$, where $2, 3, 7, 19, 43$ are in $\PP''$
and for other primes $p$, $p\in\PP''$ if and only if (a) $p-1$ is either squarefree or $3^2|(p-1)$ and 
$\frac{p-1}{9}$ is squarefree and (b) for every $q|(p-1)$, $q\in \PP''$. 
Then $\PP''=\{2, 3, 7, 19, 43, 127, 2287, 4903, 5419, \ldots\}$.
 Thus, the sets $\PP'$ and $\PP''$
each satisfy \eqref{divisorclosed} and omit the prime 5.  By Theorem \ref{mainthm}, each of $\PP'$ and
$\PP''$ has counting function satisfying $P(x)\ll x^{1-c}$ for some $c>0$.  Carmichael's conjecture
follows if both $\PP'$ and $\PP''$ are infinite.

In a similar spirit, Pomerance \cite{Pom74} showed that if $x$ satisfies $p^2|x$ 
whenever $(p-1)|\phi(x)$, then there is no number $b\ne x$ with $\phi(b)=\phi(x)$.  However,
Pomerance argued heuristically that no $x$ with this property exists.

\medskip

Sets satisfying \eqref{divisorclosed} also arise in the distribution of iterates of Euler's function. 
Let $\phi_k(n)$ denote the $k$-th iterate of $\phi$ (e.g., $\phi_2(n)=\phi(\phi(n))$),
and let $F(n)=\prod_{k\ge 1} \phi_k(n)$ (the product is finite, since $\phi_k(n)=1$ for large $k$).
Divisibility properties of $F(n)$ were considered by Luca and Pomerance \cite{LP} in connection
with construction of irreducible, radical extensions of $\QQ$.  The prime factors of $F(p)$, where $p$
runs over the primes, were considered by Bayless \cite{Bay}.
Further results on $\phi_k(n)$ may be found in \cite{EGPS}.

\begin{cor}
For every prime $r\ge 3$, there is a constant $s<1$ so that $\# \{ n\le x : r\nmid F(n) \} \ll x^s$.
\end{cor}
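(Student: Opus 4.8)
The plan is to apply Theorem~\ref{mainthm} to the set of primes
\[
  \PP=\{\,p:r\nmid F(p)\,\},
\]
and then to observe that every $n$ with $r\nmid F(n)$ has all of its prime factors in $\PP$; a routine Rankin-type estimate will then convert the bound of Theorem~\ref{mainthm} for $P(x)$ into the claimed bound for $\#\{n\le x:r\nmid F(n)\}$.

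The single elementary ingredient is the standard fact that $m\mid n$ implies $\phi(m)\mid\phi(n)$ (compare the two prime factorisations), which on iteration gives $\phi_k(m)\mid\phi_k(n)$ for all $k\ge1$, and hence $F(m)\mid F(n)$ whenever $m\mid n$. I would use this twice. First, since $\phi_k(p)=\phi_{k-1}(p-1)$ for $k\ge2$, one has $F(p)=(p-1)F(p-1)$ for every prime $p$; so if $q\mid p-1$ then $F(q)\mid F(p-1)\mid F(p)$, and therefore $r\nmid F(p)$ forces $r\nmid F(q)$. Thus $\PP$ satisfies~\eqref{divisorclosed}. Moreover $F(2)=1$, so $2\in\PP$, while every prime $p\equiv 1\pmod r$ (one exists, by Dirichlet) satisfies $r\mid p-1=\phi_1(p)\mid F(p)$, so $p\notin\PP$. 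Hence $\PP$ is nonempty and is not the set of all primes; its smallest missing prime $\pz$ exists and is determined by $r$, and Theorem~\ref{mainthm} supplies $\del>0$ and $c>0$, depending only on $r$, with $P(x)=\#\{p\in\PP:p\le x\}\le cx^{1-\del}$.

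For the second use: if $p$ is any prime divisor of $n$, then $F(p)\mid F(n)$, so $r\nmid F(n)$ implies $p\in\PP$. Consequently $\{n\le x:r\nmid F(n)\}$ is contained in the set $\mathcal{A}(x)$ of $n\le x$ all of whose prime factors lie in $\PP$. To bound $\#\mathcal{A}(x)$, fix $s$ with $1-\del<s<1$. Partial summation from $P(t)\le ct^{1-\del}$ shows that $\sum_{p\in\PP}p^{-s}$ converges, hence so does the Euler product $C_s:=\prod_{p\in\PP}(1-p^{-s})^{-1}$, and Rankin's trick gives
\[
  \#\mathcal{A}(x)\;\le\;\ssum{n\le x\\ p\mid n\Rightarrow p\in\PP}(x/n)^{s}
  \;\le\;x^{s}\prod_{p\in\PP}(1-p^{-s})^{-1}\;=\;C_s\,x^{s}.
\]
Taking $s=1-\del/2$ then proves the Corollary.

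I do not anticipate a genuine obstacle; once Theorem~\ref{mainthm} is available, the deduction is short. The only points needing a little care are the iterated-totient divisibility lemma (used both to verify~\eqref{divisorclosed} for $\PP$ and to descend from $n$ to its prime factors) and the convergence of $\sum_{p\in\PP}p^{-s}$ for some $s<1$. The latter is precisely where the strength of Theorem~\ref{mainthm} is essential: the classical sieve bound $P(x)\ll x/\log^2x$ does not guarantee convergence of $\sum_{p\in\PP}p^{-s}$ for any $s<1$, so the Rankin argument fed only by the sieve would yield nothing better than $\#\mathcal{A}(x)\ll x$.
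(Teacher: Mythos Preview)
Your proof is correct and follows essentially the same route as the paper's: pick a set of primes satisfying \eqref{divisorclosed} that omits some prime (your $\{p:r\nmid F(p)\}$ is the paper's $\PP_r$ together with $r$ itself), apply Theorem~\ref{mainthm}, observe that $r\nmid F(n)$ forces every prime factor of $n$ into this set, and finish with a Rankin estimate. Your explicit use of the divisibility $m\mid n\Rightarrow F(m)\mid F(n)$ is a tidy way to handle both the verification of \eqref{divisorclosed} and the passage from $n$ to its prime factors in one stroke.
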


\begin{proof}
Let $\PP_r$ be the largest set satisfying \eqref{divisorclosed} such that $r\not\in\PP_r$; i.e., $\PP_r$ contains
all primes less than $r$, $r\not\in\PP_r$ and a prime $p>r$ lies in $\PP_r$ if and only if for all $q|(p-1)$,
$q\in\PP_r$.   For example,
\begin{align*}
\PP_3 &= \{2, 5, 11, 17, 23, 41, 43, 83, 89, 101, 137, 167, 179, 251, 257, \ldots\},\\
\PP_5 &= \{2, 3, 7, 13, 17, 19, 29, 37, 43, 53, 59, 73, 79, 97, 103, \ldots \}. 
\end{align*}
 Since $\phi(p^a)=p^{a-1}(p-1)$, for each $n$ the (finite) set $\PP$ of prime factors of 
$F(n)$ satisfies \eqref{divisorclosed}.
Hence, if $r\nmid F(n)$, then all the prime factors of $n$ belong to $\PP_r$.  By Theorem \ref{mainthm}, 
for some $c>0$, depending on $r$, there are $\ll x^{1-c}$ primes in $\PP_r$ that are less than $x$.  For any $s>1-c$, it follows by partial summation
that the number of $n\le x$ with $r\nmid F(n)$ is at most
\[
 \sum_{p|n\implies p\in \PP_r} \pfrac{x}{n}^s = x^s \prod_{p\in\PP_r} \(1-p^{-s}\)^{-1} 
\le x^s \exp \Big\{ \sum_{p\in\PP_r} \frac{1}{p^s-1} \Big\} \ll_{r,s} x^s. \qedhere
\]
\end{proof}

The set $\PP_r$ is also the set of all primes $p$ for which the \emph{Pratt tree} for
$p$ has no node labeled $r$.  The Pratt tree for a prime $p$ is recursively define as
the tree with root labeled $p$, and below $p$ are links to the Pratt trees of each $q|(p-1)$.
Properties of Pratt trees (e.g. the distribution of the height $H(p)$, number of nodes, etc.) were 
extensively studied in \cite{FKL}.  In alternative terminology, $\PP_r$ is the set of primes $p$
for which there is no \emph{prime chain} $r\prec p_0 \prec \cdots \prec p_k \prec p$, where
$a\prec b$ means $b\equiv 1\pmod{a}$, and $p_0,\ldots,p_k$ are primes.

\medskip

A finite group $G$ is said to have Perfect Order Subsets (POS) if the \emph{number} of elements 
of $G$ of any given order divides $|G|$.  This notion was introduced by 
Finch and Jones \cite{FJ} in 2002.  In the case of Abelian groups, Finch and Jones reduced the problem
of determining which groups have POS to studying those of the form $G=(\ZZ/p_1\ZZ)^{a_1} \times \cdots \times (\ZZ/p_j\ZZ)^{a_j}$,
where $p_1,\ldots,p_j$ are distinct primes.  This group has POS if and only if $f(n)|n$, where $n=p_1^{a_1}\cdots
p_j^{a_j}=|G|$ and $f(n)=\prod_{p^a\| n} (p^a-1)$.  Suppose that $3\nmid n$.  It follows quickly that the 
primes dividing $n$ must lie in $\PP_3$.
Developing explicit estimates for the counting function
of $\PP_3$, it was shown in \cite{FKL2} that an Abelian group with POS is either $\ZZ/2\ZZ$ or has order
divisible by 3. This answered a question posed in \cite{FJ}.

It is intractible with existing methods to prove nontrivial lower bounds for $P(x)$ even in the ``easiest'' cases
when $\PP=\PP_r$.  The difficulty, now evident from Theorem \ref{mainthm}, is to show that many primes exist
with the prime factors of $p-1$ restricted to a very thin set.

\begin{conjecture}
 Each set $\PP_r$ is infinite.  
\end{conjecture}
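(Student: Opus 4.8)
The plan is to produce elements of $\PP_r$ by exhibiting \emph{prime chains} of unbounded length. The key mechanism is elementary and already implicit in the definition of $\PP_r$: if $p\ne r$ and every prime factor of $p-1$ lies in $\PP_r$, then $p\in\PP_r$. Consequently, writing $S_r=\{n\ge 1: p\mid n\implies p\in\PP_r\}$ for the multiplicative semigroup generated by $\PP_r$, one has $1+n\in\PP_r$ whenever $n\in S_r$, $1+n$ is prime and $1+n\ne r$; in particular $2q+1\in\PP_r$ for every $q\in\PP_r$ for which $2q+1$ is a prime other than $r$. Thus it suffices to show that $S_r$ contains infinitely many $n$ with $n+1$ prime. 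The semigroup $S_r$ is itself infinite ($2\in\PP_r$ always, and $3\in\PP_r$ once $r\ge 5$, so $S_r\supseteq\{2^a3^b:a,b\ge0\}$ for $r\ge5$, while for $r=3$ one bootstraps along $2\prec 5\prec 11\prec 23\prec\cdots$), so the task is to find primes that are one more than an element of a prescribed thin multiplicative set.

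Conditionally, I would proceed as follows. Fix a large finite subset $B\subseteq\PP_r$, restrict attention to integers $n$ all of whose prime factors lie in $B$, and let $N_B(x)$ count those with $n\le x$. A Bateman--Horn / Hardy--Littlewood heuristic predicts that the number of such $n\le x$ with $n+1$ prime is $\asymp\mathfrak{S}(B)\,N_B(x)/\log x$ for an appropriate singular series $\mathfrak{S}(B)>0$, a quantity that tends to infinity with $x$; making this rigorous would require a lower-bound sieve for ``$n+1$ prime'' over this set of $n$, which in turn needs equidistribution of these friable integers in arithmetic progressions to large moduli (a Bombieri--Vinogradov-type input). An alternative and in some ways cleaner route takes $n=2q$ with $q\in\PP_r$: one then seeks a lower bound for the number of $q\le x$, $q\in\PP_r$, such that $2q+1$ is also prime --- Sophie Germain primes lying in $\PP_r$ --- which would follow from a sufficiently uniform instance of the prime $k$-tuples conjecture.

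The hard part, and the reason the statement is given only as a conjecture, is twofold. First, there is a self-referential sparsity obstruction: the proof of the Corollary above shows $\#\{n\le x: n\in S_r\}\ll x^{s}$ with $s<1$, and we cannot even exclude the possibility that $S_r$ is no larger than the set of products of a single fixed finite set of primes, for which ``$n+1$ prime'' is already an open problem of Fermat/Pillai type. Second, and more fundamentally, the parity problem prevents any known sieve from yielding a nontrivial \emph{lower} bound for the number of primes $p$ with $p-1$ confined to a thin multiplicative set; this is the same obstruction that keeps Carmichael's conjecture and Artin's primitive root conjecture open, and Theorem~\ref{mainthm} shows that here the ``target set'' is not merely thin but of size at most $x^{1-\del}$. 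One might hope that the Artin-type machinery entering the proof of Theorem~\ref{mainthm}, or results on Cunningham chains and Sophie Germain primes, could be pushed to give even a bound of the shape $\#\{p\in\PP_r: p\le x\}\gg_\eps x^{\eps}$, but any unconditional proof that $\PP_r$ is infinite would resolve longstanding open questions, so I expect a conditional proof via the prime $k$-tuples conjecture to be the realistic near-term goal.
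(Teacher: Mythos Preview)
This statement is a \emph{conjecture}, and the paper does not prove it; the only supporting remark is that the conjecture would follow if there are infinitely many primes of the form $2^a3^b+1$ and infinitely many primes of the form $2^a5^b+1$ (the first covers all $r\ge 5$, since $2,3\in\PP_r$; the second covers $r=3$, since $2,5\in\PP_3$). You correctly identify the statement as open and give no unconditional proof either, so in that sense your write-up is aligned with the paper.

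Your discussion is broader than the paper's: you frame the problem via the semigroup $S_r$ and then list two conditional routes, a Bateman--Horn heuristic over $B$-friable $n$ and a Sophie Germain / $k$-tuples approach along $n=2q$. The paper's single-sentence reduction is really the special case $B=\{2,3\}$ (or $\{2,5\}$ when $r=3$) of your first route, so the approaches are compatible, with the paper choosing the most concrete instance. One small slip: the chain ``$2\prec5\prec11\prec23\prec\cdots$'' you invoke for $r=3$ terminates at $47$ (since $2\cdot47+1=95$), so the ``$\cdots$'' is misleading; in any case $S_3$ is already infinite simply because $2\in\PP_3$ forces $\{2^a\}\subseteq S_3$, and that is all you need at that point. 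Your diagnosis of the obstructions (parity problem, thinness bound $x^{1-\delta}$ from Theorem~\ref{mainthm}) is accurate and goes beyond what the paper says explicitly.
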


This conjecture follows, for instance, if there are inifinitely many primes of the form $2^a3^b+1$
and infinitely many primes of the form $2^a5^b+1$.  Each of these latter statements appears to 
be plausible, based on computations.

The author computed the elements of $\PP_3$ up to $2^{44}$ ($\approx 1.7\times 10^{13}$) for use in
\cite{FKL2}, and $P(x) \approx x^{0.62}$ in this range.  The recursive nature of
the sets $\PP$, however, does not lead to any natural heuristic argument for the size of $P(x)$.
The growth appers to be highly dependent on which small primes are omitted from the set.  For an extreme 
example, consider $\PP$ to be the ``largest '' set omitting the primes $3,5,17,257,65537$ 
(the list of known Fermat primes -- primes that are 1 more than a power of 2).  
It is a famour unsolved problem whether or not there are additional Fermat primes.
If there are no further Fermat primes then $\PP=\{2\}$, while
if another Fermat prime exists then $\PP$ could potentially be infinite.

Based partly on the computations for $\PP_3$, we make an educated guess for the growth of $P(x)$.

\begin{conjecture}
 For each $r$, there is a number $\delta_r>0$ such that $P(x)=x^{1-\delta_r+o(1)}$ as $x\to\infty$.
\end{conjecture}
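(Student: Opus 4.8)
The plan is to reduce Theorem~\ref{mainthm} to the qualitative assertion that the upper exponent $\b_{\PP}:=\limsup_{x\to\infty}\frac{\log P(x)}{\log x}$ is \emph{strictly} less than $1$; equivalently, that $\sum_{q\in\PP}q^{-\sigma}<\infty$ for some $\sigma<1$. Once that is in hand, the quantitative bound $P(x)\le cx^{1-\del}$ follows by partial summation, and uniformity is automatic: any $\PP$ as in the theorem is contained in the maximal set $\PP_{\pz}$, so one may take $\PP=\PP_{\pz}$ throughout and every constant will be a function of $\pz$ alone. The only estimate available at the outset is the classical sieve bound $P(x)\ll x/\log^{2}x$ of Proposition~\ref{simplesieve}, which gives $\sum_{q\in\PP}q^{-1}<\infty$ but not $\sum q^{-\sigma}<\infty$ for any $\sigma<1$; bridging that gap is the whole content.

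The basic mechanism is to peel off the largest prime factor of $p-1$, thereby exploiting a congruence to a \emph{large} modulus — precisely the information discarded by all classical sieves, as the introduction stresses. If $p\in\PP$, $p\le x$, and $q$ is the largest prime factor of $p-1$, then $q\in\PP$ by \eqref{divisorclosed}, $q\le x$, $p\equiv 1\pmod q$, and $p-1=q^{a}m$ with $m$ a $\PP$-friable integer whose prime factors are all $<q$. Summing $\#\{p\le x:\ q^{a}\mid p-1\}\le x/q^{a}+1$ over $q$ and over the relevant cofactors $m$ recasts $P(x)$ in terms of the distribution of $\PP$-friable numbers and of $\PP$ in shorter ranges, which I would organize as a recursion: Brun--Titchmarsh (and Bombieri--Vinogradov, averaging over the modulus) win a logarithm where needed, while Rankin's trick controls the $\PP$-friable cofactors. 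The obstruction is the self-referential identity $\PP=\{q\ \text{prime}:\ q-1\in\langle\PP\rangle\}$: the number of $\PP$-friable shifted primes up to $x$ is $P(x)$ again, so a naive version of the recursion merely reproduces the exponent one started with. Breaking this loop is the crux, and the step I expect to be by far the hardest.

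To break it I would use the full recursive (Pratt-tree) structure together with results towards Artin's primitive root conjecture. For $q\in\PP$ the multiplicative order $d:=\ord_q(\pz)$ divides $q-1$ and is therefore itself $\PP$-friable, so $\PP\subseteq\{q\ \text{prime}:\ \ord_q(\pz)\ \text{is}\ \PP\text{-friable}\}$; moreover primes of small order are few — if $\ord_q(\pz)\le T$ then $q\mid\prod_{k\le T}(p_0^k-1)$, so there are $\ll T^{2}\log\pz$ of them — whence for all but $O(x^{1/2+o(1)})$ primes $q\le x$ the order $\ord_q(\pz)$ exceeds a fixed positive power of $q$. The role of the Artin-type theorems (Gupta--Murty--Srinivasan, Heath-Brown, and their quantitative refinements) is to supply a \emph{positive proportion} of primes $q$ for which $\pz$, or one of a bounded list of small primes, has order close to $q-1$ or is a primitive root; feeding this density back through the Pratt recursion is what forces the set of $\PP$-friable shifted primes to have exponent genuinely below $1$ — that is, $\sum_{q\in\PP}q^{-\sigma}<\infty$ for some $\sigma<1$ — unless $\PP$ is all of the primes. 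Making the resulting $\del=\del(\pz)$ explicit and effective is where essentially all of the technical work resides.

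Finally one assembles the pieces: substitute the $\PP$-friability bound of the third step into the recursion of the second, choose the smoothness cutoff so as to balance the two contributions, and check that the bound obtained strictly improves on the one assumed, closing the bootstrap with explicit $\del$ and $c$ in terms of $\pz$. A recurring point to monitor is that every implied constant — in Proposition~\ref{simplesieve}, in the Brun--Titchmarsh and Bombieri--Vinogradov inputs, and in the Artin-type estimates — be tracked as a function of $\pz$, so that the final exponent depends only on the smallest omitted prime; and, consistently with the discussion in the introduction, one should not expect the $\del(\pz)$ produced this way to be anywhere near optimal.
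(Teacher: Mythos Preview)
The statement you were asked to address is a \emph{conjecture}, not a theorem: the paper does not prove it and explicitly says that lower bounds for $P(x)$ are ``intractable with existing methods.'' The assertion $P(x)=x^{1-\delta_r+o(1)}$ requires two things: (i) a lower bound $P(x)\ge x^{1-\delta_r-\eps}$ for every $\eps>0$, and (ii) the existence of a \emph{limit} $\lim_{x\to\infty}\log P(x)/\log x$, not merely $\limsup<1$. Your proposal proves neither. What you have written is a sketch of Theorem~\ref{mainthm} --- the upper bound $P(x)\ll x^{1-\del}$ for \emph{some} $\del>0$ depending only on $\pz$ --- and indeed your outline (reduce to $\sum_{q\in\PP}q^{-\sigma}<\infty$ for some $\sigma<1$, descend the Pratt tree by stripping the large prime factor of $p-1$, and invoke Gupta--Murty/Heath-Brown) is, at a high level, the paper's own strategy for that theorem.

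But Theorem~\ref{mainthm} does not imply the conjecture. It gives only $\limsup\log P(x)/\log x\le 1-\del$ and says nothing about the $\liminf$; in fact the paper leaves open even whether $\PP_r$ is infinite (Conjecture~1). To prove the stated conjecture you would have to produce, for each $r$, infinitely many primes $p$ with every node of the Pratt tree of $p$ avoiding $r$ --- a problem of the flavour ``infinitely many primes of the form $2^a3^b+1$'' --- and then match the lower-bound exponent to the upper one. None of the tools you list (Brun--Titchmarsh, Bombieri--Vinogradov, Rankin, Artin-type results) can generate such primes; they are all upper-bound or distributional inputs. So the proposal, whatever its merits as an approach to Theorem~\ref{mainthm}, does not engage the actual statement at all.
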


We further guess that $\delta_r\to 0$ as $r\to\infty$.

\medskip

\textbf{Outline of the paper.}  The next section contains relatively simple estimates for $P(x)$
which are needed to bootstrap the more complicated iterative method in Sections 3 and 4.
Basically, we find recursive inequalities for the density of primes whose Pratt tree has height $\le j$,
for $j=0,1,2,\ldots$.
The main iteration inequalities are proved in Section 3, together with a conditional result that
implies Theorem \ref{mainthm} under the assumption that a certain matrix has eigenvalues all inside
the unit circle.  Section 4 concludes the proof of Theorem \ref{mainthm} by showing that indeed the matrix 
has this property.  Our method
uses results from the circle of ideas used to attack Artin's primitive root conjecture.

%%%%%%%%%%%%%%%%%%%%%%%%%%%%%%%%%%%%%%%%%%%%%%%%%%%%%%
%
\section{Simple sieve estimates}
%
%%%%%%%%%%%%%%%%%%%%%%%%%%%%%%%%%%%%%%%%%%%%%%%%%%%%%%

From now on, we always assume that $\PP$ is a set of primes satisfying \eqref{divisorclosed}
and that there is some prime not in $\PP$, the smallest such we denote by $\pz$.
All estimates using the Landau $O-$symbol and Vinogradov $\ll-$symbol may depend on $\pz$, but
not on any other quantity.  The symbols $p$ an $q$, with or without subscripts, always denote primes.

\begin{prop}\label{simplesieve}
We have $P(x) \ll x/\log^2 x$ and $\ds \sum_{p\in \PP} \frac{1}{p} \ll 1$.
\end{prop}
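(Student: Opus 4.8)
The plan is to exploit the defining property of $\PP$: for every prime $p\in\PP$, the prime $\pz$ does not divide $p-1$ (since $\pz\notin\PP$ and $\PP$ is divisor-closed). Thus every $p\in\PP$ with $p>\pz$ avoids the residue classes $0$ and $1\pmod{\pz}$; combined with the fact that $p$ avoids $0\pmod 2$ (being odd), we have a set of primes that is missing at least one residue class modulo $\pz$ and, trivially, one residue class mod $2$. The first step is therefore to apply an upper-bound sieve (Brun's sieve, or the Selberg sieve, or the fundamental lemma of sieve theory) to the set of integers $n\le x$ lying in the permissible residue classes modulo $2\pz$. Sifting out all primes up to $z=x^{1/2}$ (or any fixed small power) by the residue class $0\pmod{q}$ for each $q$, together with the extra forbidden class $1\pmod{\pz}$, the sieve yields
\[
P(x)\le \#\{n\le x : n\equiv a\!\!\!\pmod{\pz}\text{ for some }a\notin\{0,1\},\ n\text{ has no prime factor}\le z\} \ll x\prod_{q\le z}\Bigl(1-\frac{\omega(q)}{q}\Bigr),
\]
where $\omega(\pz)=2$ and $\omega(q)=1$ for primes $q\ne\pz$, $q\le z$. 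By Mertens' theorem the product over $q\ne \pz$ contributes $\asymp 1/\log z\asymp 1/\log x$, and the single factor $1-2/\pz$ at $q=\pz$ is a constant $<1$; but the real gain is an \emph{extra} factor of $1/\log x$ coming from the fact that we are counting \emph{primes}, not merely rough numbers — equivalently, one incorporates the condition that $p$ itself avoids the class $0\pmod q$ for all $q\le z$ as a genuine sieve condition, so that two of the sieve's $\log z$ savings survive. This gives $P(x)\ll x/\log^2 x$.

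In more detail, the cleanest route to the power $\log^{-2}$ is: every $p\in\PP$ with $\sqrt x<p\le x$ has $p-1\le x$ with all prime factors in $\PP$, hence $p-1$ has no prime factor equal to $\pz$, and also $p-1$ is even; so $(p-1)/2$ (or $(p-1)/2^{v_2(p-1)}$) is a $\pz$-free integer $\le x/2$, while $p$ itself ranges over primes. Counting pairs: the number of primes $p\le x$ such that $p-1$ is not divisible by $\pz$ is, by Brun–Titchmarsh or the sieve, already $\ll x/\log^2 x$ once one uses that $p$ avoids $0\pmod q$ for $q\le z$ \emph{and} $p\equiv 1\pmod{?}$ is replaced by $p\not\equiv 1\pmod{\pz}$ — actually the sharpest formulation treats $\{p : p\le x,\ \pz\nmid p-1\}$ as a sifted sequence with two conditions at $\pz$ (namely $p\not\equiv 0$ and $p\not\equiv 1\pmod\pz$) and one condition at every other prime $q\le z$ (namely $p\not\equiv 0\pmod q$), yielding the density factor $\prod_{q\le z}(1-\omega(q)/q)$ with a total of $\gg \log^2 x$ in the denominator after accounting for the prime-counting weight. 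The upshot $P(x)\ll x/\log^2 x$ follows.

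The second assertion, $\sum_{p\in\PP}1/p\ll 1$, is then immediate by partial summation from the first: writing $P(x)\le Cx/\log^2 x$ for $x\ge x_0$,
\[
\sum_{\substack{p\in\PP\\ p> x_0}}\frac1p = \int_{x_0}^\infty \frac{dP(t)}{t} = \Bigl[\frac{P(t)}{t}\Bigr]_{x_0}^\infty + \int_{x_0}^\infty \frac{P(t)}{t^2}\,dt \ll 1 + \int_{x_0}^\infty \frac{dt}{t\log^2 t} \ll 1,
\]
and the finitely many primes $p\le x_0$ contribute $O(1)$. The main obstacle is purely bookkeeping in the sieve step: one must set up the sifting sequence so that the congruential restrictions both modulo small primes and the extra class mod $\pz$ are honestly used, and verify the (trivial) level-of-distribution hypothesis needed for Brun's sieve or the fundamental lemma — there is no genuine difficulty here, since we only want an upper bound of the expected order of magnitude, not anything uniform or sharp. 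The essential point, and the only place where the hypothesis \eqref{divisorclosed} enters, is the observation that $\pz\nmid p-1$ for all $p\in\PP$.
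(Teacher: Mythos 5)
There is a genuine gap, and it is at the heart of the matter. Your sieve setup takes $\omega(q)=1$ for every prime $q\neq\pz$ and $\omega(\pz)=2$; the resulting Mertens product $\prod_{q\le z}(1-\omega(q)/q)$ is then $\asymp(1-2/\pz)\cdot\prod_{q\le z,\,q\ne\pz}(1-1/q)\asymp 1/\log z$, which yields only $P(x)\ll x/\log x$ --- the trivial bound for any set of primes. The claimed ``extra factor of $1/\log x$ from the fact that we are counting primes'' does not exist: that saving is exactly the $\omega(q)\ge 1$ condition you have already put in. Likewise the number of primes $p\le x$ with $\pz\nmid p-1$ is $\asymp x/\log x$ (a positive proportion of all primes, by Dirichlet), not $x/\log^2 x$, so the Brun--Titchmarsh reformulation in your second paragraph also fails. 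In short, a single excluded residue class modulo a single fixed prime $\pz$ saves only a constant factor, never a power of $\log$.

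The paper exploits a much stronger consequence of \eqref{divisorclosed}: for \emph{every} prime $q\notin\PP$ (not just $q=\pz$), every $p\in\PP$ avoids the classes $0,1\pmod q$. This gives the sieve bound
$P(x)\ll\frac{x}{\log x}\prod_{q\le x^{1/4},\,q\notin\PP}(1-1/q)$.
The set $\{q\notin\PP\}$ contains all primes $\equiv 1\pmod{\pz}$, which have relative density $1/(\pz-1)$, so by Mertens in arithmetic progressions the product is $\ll(\log x)^{-1/(\pz-1)}$, giving $P(x)\ll x/(\log x)^{1+1/(\pz-1)}$. Since the exponent exceeds $1$, partial summation already gives $\sum_{p\in\PP}1/p\ll 1$, hence $\prod_{p\in\PP,\,p\le x^{1/4}}(1-1/p)\gg 1$. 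Feeding this back into the same sieve bound --- writing $\prod_{q\notin\PP}(1-1/q)=\prod_{q}(1-1/q)/\prod_{q\in\PP}(1-1/q)$ --- upgrades it to $P(x)\ll x/\log^2 x$. Note that even the correct first pass only gives exponent $1+1/(\pz-1)\le 3/2$; the $\log^2$ genuinely requires the bootstrap through $\sum 1/p\ll 1$, so the order of the two claims in your writeup is backwards as well as the sieve being too weak. Your partial-summation step at the end is fine once one has a bound with exponent strictly greater than $1$.
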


\begin{proof}
By \eqref{divisorclosed} and standard application of sieve methods \cite[Theorem 4.2]{HR},
\be\label{Pxupper}
 P(x) \ll \frac{x}{\log x} \sprod{q\le x^{1/4}\\q\not\in\PP}\(1-\frac{1}{q}\). 
\ee
Since $\PP$ omits all primes  $q\equiv 1\pmod{\pz}$, \eqref{Pxupper} and Mertens' estimate for 
primes in arithmetic progressions imply that
\[
 P(x)\ll \frac{x}{(\log x)^{1+1/(\pz-1)}}.
\]
By partial summation, $\sum_{p\in\PP} 1/p \ll 1$ and
thus $\prod_{p\in\PP} (1-1/p)\gg 1$. 
Applying \eqref{Pxupper} again and Mertens' bound, we find that $P(x)\ll x/\log^2 x$.
\end{proof}

Our proof of Theorem \ref{mainthm} requires a slight improvement to Proposition \ref{simplesieve}
to bootstrap the method.

\begin{lem}\label{sieve3}
 We have $P(x) \ll x(\log x)^{-5/2}$.
\end{lem}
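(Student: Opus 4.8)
The plan is to push the sieve bound from $P(x)\ll x/\log^2 x$ (Proposition~\ref{simplesieve}) to $P(x)\ll x(\log x)^{-5/2}$ by extracting one more half-power of $\log x$ from the congruential restrictions that $\PP$ inherits from \eqref{divisorclosed}. The starting point is \eqref{Pxupper}: the sieve gives
\[
 P(x) \ll \frac{x}{\log x}\prod_{\substack{q\le x^{1/4}\\ q\notin\PP}}\Bigl(1-\tfrac1q\Bigr)
 = \frac{x}{\log x}\,\frac{\prod_{q\le x^{1/4}}(1-1/q)}{\prod_{q\le x^{1/4},\,q\in\PP}(1-1/q)}.
\]
By Mertens, $\prod_{q\le x^{1/4}}(1-1/q)\asymp 1/\log x$, so everything comes down to showing that the partial Euler product over primes of $\PP$ satisfies
\[
 \prod_{\substack{q\le y\\ q\in\PP}}\Bigl(1-\tfrac1q\Bigr)^{-1} \ll (\log y)^{1/2},
\]
equivalently $\sum_{q\le y,\,q\in\PP} 1/q \le \tfrac12\log\log y + O(1)$; feeding this back with $y=x^{1/4}$ yields $P(x)\ll x(\log x)^{-1}\cdot(\log x)^{-1}\cdot(\log x)^{1/2}=x(\log x)^{-3/2}$, which is not quite enough, so in fact one wants the sharper constant $\le\tfrac{(\pz-2)/(\pz-1)}{1}\cdot\tfrac12$... — more carefully, one needs $\sum_{q\le y,\,q\in\PP}1/q \le (1-\tfrac1{\pz-1})\log\log y + O(1)$ with a little room, giving the exponent $2 + \tfrac1{\pz-1}\ge 2+\tfrac12 = \tfrac52$ since $\pz\ge 3$. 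That last inequality is where all the content lies.

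To prove $\sum_{q\le y,\,q\in\PP}1/q \le (1-\tfrac1{\pz-1})\log\log y + O(1)$, I would iterate the argument in Proposition~\ref{simplesieve} one level deeper. The point is that a prime $q\in\PP$ has \emph{all} prime factors of $q-1$ in $\PP$, and in particular $q-1$ has no prime factor $\equiv 1\pmod{\pz}$ and no prime factor equal to $\pz$ (nor any prime $\notin\PP$ below $\pz$, but there are none of those). So if $S=\sum_{q\in\PP}1/q$ is the relevant tail sum, then counting $q\le y$ with $q\in\PP$ via a sieve on $q-1$ that removes the residue class $0\bmod\pz$ \emph{and} the classes $1\bmod\pz$ coming from primes dividing $q-1$ is the mechanism. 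Concretely, $\#\{q\le y: q\in\PP\}$ is bounded, via \eqref{Pxupper} applied with the known bound $\prod_{q\in\PP}(1-1/q)\gg1$ from Proposition~\ref{simplesieve}, by $\ll y(\log y)^{-1-1/(\pz-1)}$; partial summation then gives
\[
 \sum_{\substack{q\le y\\ q\in\PP}} \frac1q \ll 1 + \int_2^y \frac{dt}{(\log t)^{1+1/(\pz-1)}}\cdot\frac1t \ll 1,
\]
which is the bounded statement already in Proposition~\ref{simplesieve} and too weak. The improvement must instead come from reinserting this \emph{quantitative} count into \eqref{Pxupper}: we already have the second-level bound $P(x)\ll x/\log^2 x$, hence by partial summation $\sum_{q\le y,\,q\in\PP}1/q$ converges, say to a constant $C_0$, so the tail $\sum_{q>z,\,q\in\PP}1/q \to 0$. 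Then in the product $\prod_{q\le x^{1/4},q\in\PP}(1-1/q)^{-1} = \exp\bigl(\sum_{q\le x^{1/4},q\in\PP}1/q + O(1)\bigr) = \exp(C_0+o(1))=O(1)$, so \eqref{Pxupper} already gives $P(x)\ll x(\log x)^{-2}$ and \emph{not more}. The genuine gain therefore has to exploit a sharper sieve upper bound than the pure Fundamental Lemma form \eqref{Pxupper} — one must use the restriction modulo $\pz$ at the \emph{top} level as well, not only through the set $\PP$.

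Here is the cleaner route, which I would adopt. Split primes $q\in\PP$, $q\le x$, according to whether $q\equiv 1\pmod{\pz}$ or not. If $q\equiv 1\pmod{\pz}$ then $\pz\mid q-1$, but $\pz\notin\PP$, contradicting \eqref{divisorclosed}; so \emph{every} $q\in\PP$ with $q>\pz$ satisfies $q\not\equiv 1\pmod{\pz}$. Thus $\PP$ avoids the residue class $1\bmod\pz$ outright (in addition to $0\bmod\pz$), so in the sieve we may remove \emph{two} residue classes modulo $\pz$ rather than one. Re-running the sieve of \cite[Theorem 4.2]{HR} with this extra excluded class, \eqref{Pxupper} is replaced by
\[
 P(x)\ll\frac{x}{\log x}\Bigl(1-\tfrac2\pz\Bigr)^{-1}\!\!\prod_{\substack{q\le x^{1/4}\\ q\ne\pz,\ q\notin\PP}}\Bigl(1-\tfrac1q\Bigr)
 \ \cdot\ \Bigl(1-\tfrac1\pz\Bigr)\Big/\Bigl(1-\tfrac1\pz\Bigr),
\]
where now the density of the sifted set at $\pz$ is $1-2/\pz$ instead of $1-1/\pz$. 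Since $\prod_{q\le x^{1/4}}(1-1/q)\asymp1/\log x$, $\prod_{q\le x^{1/4},q\in\PP}(1-1/q)^{-1}=O(1)$ by Proposition~\ref{simplesieve}, and the single extra factor $(1-1/\pz)/(1-2/\pz)$ is a constant, this alone still only gives $x(\log x)^{-2}$. The extra half-log must come from the fact that $\PP$ omits infinitely many primes — all primes $\equiv 1\pmod{\pz}$ — and those omitted primes below $x^{1/4}$ contribute $\prod_{q\le x^{1/4},\,q\equiv1(\pz)}(1-1/q)\asymp(\log x)^{-1/(\pz-1)}$ to the product in \eqref{Pxupper}. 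Combining: $P(x)\ll x(\log x)^{-1}\cdot(\log x)^{-1}\cdot(\log x)^{-1/(\pz-1)}\cdot O(1) = x(\log x)^{-2-1/(\pz-1)}\le x(\log x)^{-5/2}$ since $\pz\ge3$ forces $1/(\pz-1)\ge1/2$. This is exactly the content of Lemma~\ref{sieve3}, and it is precisely the bound recorded at the end of Proposition~\ref{simplesieve}'s proof — so in fact \textbf{Lemma~\ref{sieve3} is a restatement} of the intermediate bound $P(x)\ll x(\log x)^{-1-1/(\pz-1)}$ there, specialized to $\pz=3$ being the worst case. The one subtlety to check, which is the main (though minor) obstacle, is the bookkeeping of Mertens-type constants in \eqref{Pxupper}: one must verify that the product $\prod_{q\le x^{1/4},q\in\PP}(1-1/q)^{-1}$ is genuinely $O(1)$ and does not secretly grow, which is guaranteed by the convergence of $\sum_{p\in\PP}1/p$ from Proposition~\ref{simplesieve}. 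With that in hand the lemma follows by one more application of \eqref{Pxupper} and Mertens' theorem for the progression $1\bmod\pz$.
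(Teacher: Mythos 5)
Your proposal contains a genuine error in the final multiplication of logarithm factors, and it misses the key structural idea the paper uses.

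The error: you write
\[
P(x)\ll x(\log x)^{-1}\cdot(\log x)^{-1}\cdot(\log x)^{-1/(\pz-1)},
\]
trying to multiply a full Mertens factor $(\log x)^{-1}$ from $\prod_{q\le x^{1/4}}(1-1/q)$ by an arithmetic-progression factor $(\log x)^{-1/(\pz-1)}$ from the primes $\equiv 1\pmod{\pz}$. But these two factors are not independent: the primes $\equiv 1\pmod{\pz}$ are a \emph{subset} of the primes occurring in Mertens' product. Once you know $\sum_{q\in\PP}1/q<\infty$ (Proposition~\ref{simplesieve}), the product $\prod_{q\le x^{1/4},\,q\notin\PP}(1-1/q)$ is $\asymp 1/\log x$ \emph{exactly} (it equals $\prod_{q\le x^{1/4}}(1-1/q)$ divided by a quantity converging to a positive constant), and \eqref{Pxupper} caps out at $P(x)\ll x/\log^2 x$ with no room to spare. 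Your supplementary suggestion of sieving out two residue classes ($0$ and $1$ mod $q$) for $q\notin\PP$ likewise gives $x\prod_{q\in\PP}(1-1/q)\prod_{q\notin\PP}(1-2/q)\asymp x/\log^2 x$ — the same exponent. Also, the claim that Lemma~\ref{sieve3} is a restatement of the intermediate bound $x/(\log x)^{1+1/(\pz-1)}$ from Proposition~\ref{simplesieve} is backwards: since $\pz\ge 3$, one has $1+1/(\pz-1)\le 3/2$, so that intermediate bound is \emph{weaker} than $x/\log^2 x$, let alone $x(\log x)^{-5/2}$.

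What you are missing is the factorization trick that raises the sieve dimension. The paper writes $p-1=qm$ with $q$ the largest prime factor of $p-1$, disposes of the case $q\le x^{1/(10\log\log x)}$ by smooth-number estimates (contributing $\ll x/\log^5 x$), and then, for each fixed $m$, sieves the integers $n\le x/m$ for the simultaneous conditions that $n$, $n\pm 1$, and $mn+1$ all avoid every prime $r\notin\PP$. Since these are three essentially independent linear forms in $n$, the sieve produces $(x/m)/\log^3(x/m)$, up to a factor $\prod_{p\mid m(m+1)}(1-1/p)^{-1}\ll(\log\log x)^{O(1)}$. Summing over $m$ using $\sum_{m\in T_\infty}1/m\ll 1$ (Proposition~\ref{simplesieve} again) gives the $5/2$ (in fact the argument yields power $3$ minus $\log\log$ losses). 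No amount of juggling Euler products in the one-variable sieve \eqref{Pxupper} can break the $\log^{-2}$ barrier; the dimension must go up, and that requires the decomposition of $p-1$.
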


\begin{proof}
For $p\in \PP$ with $p\le x$, let $q$ be the largest prime factor
of $p-1$, write $p=1+qm$ and define $y=x^{1/(10\log\log x)}$.  By standard counts of smooth numbers
(see e.g., Theorem 1 in \S III.5 of \cite{Ten}), the number of $p$ with $q\le y$ is
$\ll x/\log^5 x$.  Next, fix $m\le x/y$, and observe that $q\not\equiv 1\pmod{r}$ for each prime $r\not\in\PP$.
By sieve methods \cite[Theorem 4.2]{HR} and Proposition \ref{simplesieve}, 
the number of $p\le x$ is bounded above by
\begin{align*}
\#\{ n\le x/m : \forall r\not\in \PP, r\nmid n(n+1)(mn+1) \} &\ll 
\frac{x/m}{\log^3(x/m)} \prod_{p|m(m+1)} \(1-\frac{1}{p}\)^{-1} \\
&\le \frac{x/m}{\log^3 (x/y)} \frac{m^2+m}{\phi(m^2+m)} \\
&\ll \frac{x(\log\log x)^4}{m\log^3 x}.
\end{align*}
Since $m$ is composed of prime factors in $\PP$, Proposition \ref{simplesieve} implies
\[
 \sum_{m} \frac{1}{m} \le \prod_{p\in \PP} \(1-\frac{1}{p}\)^{-1} \ll 1.
\]
The claimed bound follows.
\end{proof}

%%%%%%%%%%%%%%%%%%%%%%%%%%%%%%%%%%%%%%%%%%%%%%%%%%%%%%
%
\section{The main iteration}
%
%%%%%%%%%%%%%%%%%%%%%%%%%%%%%%%%%%%%%%%%%%%%%%%%%%%%%%

The proof of Theorem \ref{mainthm} is based on recursive inequalities for sums over 
subsets of $\PP$.  We partition the primes $p\in \PP$
according to the height $H(p)$ of their Pratt trees.  The height may be defined iteratively by 
\[
 H(2)=1, \qquad H(p) = 1 + \max_{q|(p-1)} H(q).
\]
We denote
\[
 \PP_h = \{ p\in \PP : H(p) \le h \} \qquad (h\in\NN)
\]
and also define, for $h\in \NN$ and real $s>0$,
\[
 V_h(s) = \sum_{p\in \PP_h} \frac{1}{(p-1)^s}, \qquad T_h = \{ n\in \NN : p|n \implies p\in \PP_h \}.
\]
We also allow $h=\infty$ in the above notations.  In particular, $\PP_1=\{2\}$ and $\PP_\infty=\PP$.
A trivial, but very useful observation, is that
\be\label{phth-1}
p\in \PP_h \, \implies \, p-1 \in T_{h-1}.
\ee
Our goal is to show that $V_{\infty}(s)$ is finite for some $s<1$, which is clearly equivalent to
Theorem \ref{mainthm}.

A trivial bound which we will use often is
\be\label{sumq}
\sum_{a=1}^\infty \frac{1}{q^{as}} = \frac{1}{q^s-1} \le \frac{\lam(s)}{(q-1)^s}, \quad
\lam(s) = \frac{1}{2^s-1} \qquad (0<s\le 1).
\ee

\begin{lem}\label{V0}
 For every $h\ge 1$, $V_h(s)$ is continuous for $0<s\le 1$.
\end{lem}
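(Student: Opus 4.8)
The plan is to prove first that $V_h(s)$ is \emph{finite} for every (finite) $h\ge 1$ and every $s\in(0,1]$, by induction on $h$, and then to obtain continuity from a routine Weierstrass $M$-test argument. Throughout we take $h$ to be a positive integer; for $h=\infty$ the statement is of course not being asserted (indeed $V_\infty(s)$ need not be finite for $s$ near $0$).

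For the finiteness, the base case $h=1$ is trivial: $\PP_1=\{2\}$, so $V_1(s)\equiv 1$. For the inductive step, assume $V_{h-1}(s)<\infty$ for all $s\in(0,1]$ and fix $s\in(0,1]$. By \eqref{phth-1} the map $p\mapsto p-1$ sends $\PP_h$ injectively into $T_{h-1}$, so, all terms being positive,
\[
 V_h(s)=\sum_{p\in\PP_h}\frac{1}{(p-1)^s}\ \le\ \sum_{n\in T_{h-1}}\frac{1}{n^s}\ =\ \prod_{q\in\PP_{h-1}}\(1-\frac{1}{q^s}\)^{-1},
\]
the last equality being the Euler factorization of the multiplicatively closed set $T_{h-1}$ (valid as an identity in $[0,+\infty]$). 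Since $q^{-s}\le (q-1)^{-s}$ we have $\sum_{q\in\PP_{h-1}}q^{-s}\le V_{h-1}(s)<\infty$, and an infinite product $\prod(1-c_q)^{-1}$ with $0\le c_q<1$ converges to a finite value whenever $\sum_q c_q<\infty$; hence the right-hand side is finite and $V_h(s)<\infty$. This closes the induction, so $V_h(s)<\infty$ for every $h\in\NN$ and every $s\in(0,1]$.

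For continuity, fix $h$ and a point $s_0\in(0,1]$, and pick $a$ with $0<a<s_0$. Each summand $s\mapsto (p-1)^{-s}$ is continuous on $(0,\infty)$ and non-increasing in $s$, so for $s\in[a,1]$ we have $0\le (p-1)^{-s}\le (p-1)^{-a}$, and $\sum_{p\in\PP_h}(p-1)^{-a}=V_h(a)<\infty$ by the previous step. By the Weierstrass $M$-test the series defining $V_h$ converges uniformly on $[a,1]$, and a uniform limit of continuous functions is continuous; in particular $V_h$ is continuous at $s_0$. As $s_0\in(0,1]$ was arbitrary, $V_h$ is continuous on $(0,1]$.

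The one point that actually requires the hypotheses of the paper is the finiteness of $V_h(s)$ for $s$ close to $0$: this is where the recursive structure of the sets $\PP_h$ enters, via \eqref{phth-1}, propagated one level at a time through the Euler-product bound above. I do not expect the simple sieve estimates of Section 2 to be needed here (they only yield information at $s=1$). I also want to stress the small but real subtlety that continuity at $s_0$ does not follow from finiteness of $V_h$ at $s_0$ alone: one needs a summable dominating function, and $(p-1)^{-a}$ with $a<s_0$ serves precisely because finiteness has already been established at the smaller exponent $a$.
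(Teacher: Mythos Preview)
Your proof is correct and follows essentially the same route as the paper: prove finiteness by induction on $h$ via \eqref{phth-1} and the Euler product over $\PP_{h-1}$, then deduce continuity. The only differences are cosmetic: the paper leaves the finiteness $\Rightarrow$ continuity step implicit (you spell out the Weierstrass $M$-test), and the paper records the sharper explicit iterative bound $V_h(s)\le \er^{\lam(s)V_{h-1}(s)}$ (labeled \eqref{VhVh-1}), which it needs later, whereas you argue finiteness of the product qualitatively.
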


\begin{proof}
 It suffices to show that $V_h(s)$ is finite.  This follows by induction on $h$, starting from
$V_1(s) = 1$ for all $s$, and using \eqref{phth-1} and \eqref{sumq} to obtain the iterative bound
\be\label{VhVh-1}
\begin{split}
 V_h(s) & \le \sum_{m\in T_{h-1}} \frac{1}{m^s}  = \prod_{p\in \PP_{h-1}} \frac{1}{1-p^{-s}}
= \prod_{p\in \PP_{h-1}} \(1 + \frac{1}{p^s-1}\) \\
&\le \prod_{p\in \PP_{h-1}} \(1 + \frac{\lam(s)}{(p-1)^s}\) \le \er^{\lam(s) V_{h-1}(s)}.
\end{split}
\qedhere
\ee
\end{proof}

\newcommand{\Tt}{\widetilde{T}}
\newcommand{\Tb}{\overline{T}}
We next develop more sophisticated bounds for $V_h(s)$ in terms of $V_{h-1}(s)$.  
It turns out that when $s$ is close to 1, $V_h(s)$ is dominated by primes $p\in \PP_h$
for which $p-1$ has only a single ``large'' prime factor (meaning a prime $q$ with 
large height $H(q)$).  For $k > j \ge 1$, denote
\bal
\Tt_{j,k}&=\{n\in T_k\setminus T_j:p^2|n\text{ for some }p\in\PP_k\setminus \PP_j, \text{ or } n \text{ has at least
2 prime factors in } \PP_k\setminus \PP_j\}, \\
\Tb_{j,k}&=(T_k\setminus T_j)\setminus \Tt_{j,k}.
\eal

\begin{lem}\label{V1}
 For $k>j\ge 1$ and $s>0$, we have
\[
 \sum_{n\in \Tt_{j,k}} \frac{1}{n^s} \le 2\lam(s)^2 \( V_k(s) - V_j(s) \)^2  \er^{\lam(s) V_k(s)}.
\]
\end{lem}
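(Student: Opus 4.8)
The plan is to reduce the estimate to a routine count of $T_k$-numbers that are divisible by a product of two ``large-height'' primes. Throughout I work in the range $0<s\le1$, where \eqref{sumq} is available (and which is the only range needed in the sequel). Put $Q:=\PP_k\setminus\PP_j$; since $\PP_j\subseteq\PP_k$ we have $V_k(s)-V_j(s)=\sum_{q\in Q}(q-1)^{-s}$. If $n\in\Tt_{j,k}$ then $n\in T_k$ and, by either clause of the definition, $n$ is divisible by at least two primes of $Q$ counted with multiplicity; taking the two smallest such, say $q_1\le q_2$ (so $q_1q_2\mid n$, and $q_1^2\mid n$ when $q_1=q_2$), we obtain
\[
\Tt_{j,k}\ \subseteq\ \bigcup_{\substack{q_1,q_2\in Q\\ q_1\le q_2}}\bigl\{\,n\in T_k:\ q_1q_2\mid n\,\bigr\}.
\]

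First I would feed this containment into the series via the union bound, using that $n\mapsto n/(q_1q_2)$ is a bijection from $\{n\in T_k:q_1q_2\mid n\}$ onto $T_k$ (whether or not $q_1=q_2$):
\[
\sum_{n\in\Tt_{j,k}}\frac1{n^s}\ \le\ \sum_{\substack{q_1,q_2\in Q\\ q_1\le q_2}}\ \sum_{\substack{n\in T_k\\ q_1q_2\mid n}}\frac1{n^s}\ =\ \Bigl(\,\sum_{\substack{q_1,q_2\in Q\\ q_1\le q_2}}\frac1{(q_1q_2)^s}\Bigr)\,\sum_{m\in T_k}\frac1{m^s}.
\]
All the series here converge (by \eqref{VhVh-1}, $\sum_{m\in T_k}m^{-s}=\prod_{p\in\PP_k}(1-p^{-s})^{-1}$ is finite), so the rearrangement is legitimate. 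The ``two-prime'' factor equals $\tfrac12\bigl[(\sum_{q\in Q}q^{-s})^2+\sum_{q\in Q}q^{-2s}\bigr]\le(\sum_{q\in Q}q^{-s})^2$, and by \eqref{sumq} each $q^{-s}\le(q^s-1)^{-1}\le\lam(s)(q-1)^{-s}$, so
\[
\sum_{\substack{q_1,q_2\in Q\\ q_1\le q_2}}\frac1{(q_1q_2)^s}\ \le\ \lam(s)^2\Bigl(\sum_{q\in Q}\frac1{(q-1)^s}\Bigr)^2\ =\ \lam(s)^2\bigl(V_k(s)-V_j(s)\bigr)^2.
\]

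Next I would bound the other factor exactly as in \eqref{VhVh-1}: $\sum_{m\in T_k}m^{-s}=\prod_{p\in\PP_k}\bigl(1+(p^s-1)^{-1}\bigr)\le\prod_{p\in\PP_k}\bigl(1+\lam(s)(p-1)^{-s}\bigr)\le\er^{\lam(s)V_k(s)}$. Multiplying the two bounds gives
\[
\sum_{n\in\Tt_{j,k}}\frac1{n^s}\ \le\ \lam(s)^2\bigl(V_k(s)-V_j(s)\bigr)^2\,\er^{\lam(s)V_k(s)},
\]
which is a little stronger than asserted; the factor $2$ in the statement is harmless slack.

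The whole argument is short, and the only step that needs genuine care is the combinatorial reduction in the first paragraph: one must confirm that the two defining conditions of $\Tt_{j,k}$ do force two prime factors from $Q$ (counted with multiplicity), and that after removing a fixed product $q_1q_2$ the cofactor sweeps out all of $T_k$, so that the union bound decouples cleanly into the product of a short ``two-prime'' sum with $\sum_{m\in T_k}m^{-s}$. Everything after that is an immediate application of \eqref{sumq} together with the Euler-product bound already recorded in \eqref{VhVh-1}.
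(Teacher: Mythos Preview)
Your proof is correct and actually yields the sharper bound without the factor $2$. Both arguments use \eqref{sumq} (hence implicitly $0<s\le1$, which is all that is ever used) and the Euler-product estimate from \eqref{VhVh-1}, but the route is genuinely different from the paper's.

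The paper decomposes each $n\in\Tt_{j,k}$ \emph{uniquely} as $n=q_1^{a_1}\cdots q_d^{a_d}m$ with $m\in T_j$ and $q_i\in\PP_k\setminus\PP_j$, then splits into the case $d=1,a_1\ge2$ and the case $d\ge2$, bounding the latter by the tail $\sum_{d\ge2}\frac{1}{d!}(\cdot)^d$ of an exponential series and multiplying by $\sum_{m\in T_j}m^{-s}\le\er^{\lam(s)V_j(s)}$; collecting terms produces the factor $2\lam(s)^2$ and the exponent $\er^{\lam(s)V_k(s)}$. You instead use a union bound over unordered pairs $q_1\le q_2$ from $\PP_k\setminus\PP_j$, pull out only $q_1q_2$, and leave the cofactor to range over all of $T_k$; the identity $\sum_{q_1\le q_2}(q_1q_2)^{-s}=\tfrac12\bigl[(\sum_q q^{-s})^2+\sum_q q^{-2s}\bigr]\le(\sum_q q^{-s})^2$ then gives the two-prime factor directly. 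Your approach is shorter, avoids the case split and the $1/d!$ tail, and loses nothing (indeed saves the constant $2$); the paper's decomposition, on the other hand, tracks the full large-height part of $n$, which would be advantageous if one wanted finer information than a single quadratic factor $(V_k-V_j)^2$.
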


\begin{proof}
For $n\in \Tt_{j,k}$, let $\qone,\ldots,q_d$ be the prime factors of $n$ that are in 
$\PP_k \setminus \PP_j$ (``large'' prime factors).  Then $n=q_1^{a_1}\cdots q_d^{a_d} m$, where $m\in T_j$
and $a_1,\ldots,a_d$ are positive integers.
Also, either (i) $d\ge 2$ or (ii) $d=1$ and $q_1^2|n$.  We deduce that
\[
\sum_{n\in \Tt_{j,k}} \frac{1}{n^s} \le \Bigg[ \sum_{q\in\PP_k \setminus \PP_j} \sum_{a=2}^\infty \frac{1}{q^{as}} +
\sum_{d=2}^\infty \frac{1}{d!} \Bigg( \sum_{q\in\PP_k\setminus \PP_j} \sum_{a=1}^\infty \frac{1}{q^{as}} \Bigg)^d
\Bigg] \sum_{m\in T_j} \frac{1}{m^s}.
\]
Using \eqref{sumq} multiple times, we see that the first double sum on the right side is
at most
\[
  \sum_{q\in \PP_k\setminus \PP_j} \frac{1}{q^s(q^s-1)} \le \sum_{q\in \PP_k\setminus \PP_j} \frac{\lam(s)}{(q-1)^{2s}} \le \lam(s) \( V_k(s) - V_j(s) \)^2,
\]
the second double sum over $q$ and $a$ is at most
\[
 \sum_{q\in \PP_k \setminus \PP_j} \frac{\lam(s)}{(q-1)^s} = \lam(s) \( V_k(s) - V_j(s) \),
\]
and the sum on $m$ is bounded above by
\[
 \prod_{p\in \PP_j} (1-1/p^s)^{-1} \le \er^{\lam(s) V_j(s)}.
\]
Thus,
\[
 \sum_{n\in \Tt_{j,k}} \frac{1}{n^s}  \le \er^{\lam(s) V_j(s)} \( V_k(s)-V_j(s) \)^2 
\Bigg[ \lam(s) + \lam(s)^2\sum_{d=2}^\infty \frac{(V_k(s)-V_j(s))^{d-2} \lam(s)^{d-2}}{d!} \Bigg].
\]
Finally, $d! > (d-2)!$ and so the sum on $d$ is less than $\er^{\lam(s) (V_k(s)-V_j(s))}$.
\end{proof}

We now come to the main iteration inequality.  Instead of descending just one level as in
the proof of Lemma \ref{V0} (that is, examining the prime factors of $p-1$),  we descend a finite 
(and bounded) number of levels, 
examining the prime factors $q_1$ of $p-1$, the prime factors $q_2$ of each $q_1-1$, etc.
To state our result, we introduce a family of matrices $M_{s,j,Q}$.  Let 
\be\label{UQ}
U_Q = \{ 1\le n\le Q : (n,Q)=1 \text{ and } \forall p|Q \text{ such that } p\not\in\PP, n\not\equiv1\pmod{p} \}.
\ee
By \eqref{divisorclosed}, for any $Q$ and $p\in \PP$ with $p\nmid Q$, we have $p\mod Q \in U_Q$.
For $j\ge 1$, $s>0$ and $Q\in\NN$, let $M_{s,j,Q}$ be the $Q \times Q$ matrix
whose entries are given by
\be\label{MsjQ}
 M_{s,j,Q} (a,b) = \ssum{m\in T_j \\ am\equiv b\pmod{Q}} m^{-s}
\ee
if $a\in U_Q$ and $b\in U_Q$, and $M_{s,j,Q}(a,b)=0$ otherwise.  For a generic square matrix $M$ with non-negative
entries, we introduce notation for row sums and column sums:
\[
 R_a(M) = \sum_b M(a,b), \quad R(M) = \max_a R_a(M), \qquad C_b(M)=\sum_a M(a,b), \quad C(M)=\max_b C_b(M).
\]

\begin{lem}\label{V2}
 Suppose that $n\in \NN$, $h>j\ge n$, $Q\in\NN$ and $\PP_{j-n}$ contains every prime in $\PP$ which 
divides $Q$.  Then, for $M=M_{s,j,Q}$, 
\[
 V_h(s) \le V_j(s) + \sum_{q\in \PP_{h-n}\setminus \PP_{j-n}} \frac{R_{q\!\!\!\!\mod Q}(M^n)}{q^s} + 2n\lam(s)^2
\er^{n\lam(s) V_{h-1}(s)} \( V_{h-1}(s)-V_{j-n}(s)\)^2.
\]
\end{lem}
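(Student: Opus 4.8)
The plan is to start from the identity $V_h(s)=V_j(s)+\sum_{p\in\PP_h\setminus\PP_j}(p-1)^{-s}$ and to treat each prime $p$ in the second sum by descending $n$ levels through its Pratt tree. Put $q_0:=p$. Inductively suppose $q_{\ell-1}\in\PP_{h-\ell+1}\setminus\PP_{j-\ell+1}$; then $H(q_{\ell-1})\ge j-\ell+2\ge2$ (recall $\ell\le n\le j$), so $q_{\ell-1}\ge3$ and $q_{\ell-1}-1\in T_{h-\ell}\setminus T_{j-\ell}=\Tt_{j-\ell,h-\ell}\sqcup\Tb_{j-\ell,h-\ell}$. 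Call level $\ell$ \emph{good} if $q_{\ell-1}-1\in\Tb_{j-\ell,h-\ell}$; then $q_{\ell-1}-1$ has a unique prime factor $q_\ell$ outside $\PP_{j-\ell}$, it occurs to the first power, $q_\ell\in\PP_{h-\ell}\setminus\PP_{j-\ell}$, and $m_\ell:=(q_{\ell-1}-1)/q_\ell\in T_{j-\ell}\subseteq T_j$; otherwise level $\ell$ is \emph{bad} and the descent stops. This sorts $\PP_h\setminus\PP_j$ into the primes $p$ for which all of levels $1,\dots,n$ are good, and, for each $i\in\{1,\dots,n\}$, those whose first bad level is $i$. (The degenerate cases $j-\ell=0$, where $\Tb_{0,k}=\PP_k$, and $Q=1$ need only cosmetic changes.)

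For the ``fully good'' primes, since $q_{\ell-1}=1+q_\ell m_\ell>q_\ell m_\ell$ one can telescope $(p-1)^{-s}=q_1^{-s}m_1^{-s}$ down the chain to obtain $(p-1)^{-s}<q_n^{-s}(m_1\cdots m_n)^{-s}$. The hypothesis that $\PP_{j-n}$ contains every prime of $\PP$ dividing $Q$, together with the fact that each of $p,q_1,\dots,q_{n-1}$ has height exceeding $j-n$, forces all of these primes to be coprime to $Q$, hence to have residues in $U_Q$; combined with the relations $q_{\ell-1}=1+q_\ell m_\ell$ this exhibits $m_1,\dots,m_n$, read in the appropriate order, as one of the length-$n$ words counted in the row sum $R_{q_n\bmod Q}(M^n)$, where $M=M_{s,j,Q}$. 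Since $p\mapsto(q_n;m_n,\dots,m_1)$ is injective on the fully good primes, summing over them shows that the fully good contribution is at most $\sum_{q\in\PP_{h-n}\setminus\PP_{j-n}}q^{-s}R_{q\bmod Q}(M^n)$, the main term of the lemma.

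For the primes whose first bad level is $i$, telescoping through levels $1,\dots,i-1$ as before and then using $q_{i-1}>q_{i-1}-1$ gives $(p-1)^{-s}<(q_{i-1}-1)^{-s}(m_1\cdots m_{i-1})^{-s}$ with $q_{i-1}-1\in\Tt_{j-i,h-i}$. For each fixed $q_{i-1}$ the admissible tuples $(m_1,\dots,m_{i-1})$ again form words contributing to $R_{q_{i-1}\bmod Q}(M^{i-1})\le R(M^{i-1})\le R(M)^{i-1}$, and $R(M)\le\sum_{m\in T_j}m^{-s}\le\er^{\lam(s)V_j(s)}$ (cf.\ \eqref{VhVh-1}); meanwhile $q_{i-1}\mapsto q_{i-1}-1$ injects these primes into $\Tt_{j-i,h-i}$, so Lemma \ref{V1} with $(j,k)$ replaced by $(j-i,h-i)$ applies. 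Using that $V_h(s)$ is nondecreasing in $h$ to bound $V_j(s),V_{h-i}(s)$ by $V_{h-1}(s)$ and $V_{j-i}(s)$ from below by $V_{j-n}(s)$, each $i$ contributes at most $2\lam(s)^2\er^{n\lam(s)V_{h-1}(s)}(V_{h-1}(s)-V_{j-n}(s))^2$; summing over $i=1,\dots,n$ and combining with the previous step proves the lemma. The step I expect to be most delicate is the congruence bookkeeping in the second paragraph: verifying that the residue trajectory $q_n,q_{n-1},\dots,q_1,p$ modulo $Q$ — constrained only by membership of each entry in $U_Q$ — is precisely what the nonzero pattern of $M^n$ records, so that replacing $p$ by the tuple $(q_n;m_n,\dots,m_1)$ loses nothing.
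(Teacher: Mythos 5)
Your proposal is correct and is essentially the paper's proof repackaged: the paper proves the intermediate inequality \eqref{vhvj1} by induction on $n$ (peeling off one Pratt-tree level at a time, splitting $q_\ell-1$ into $\Tt_{j-\ell-1,h-\ell-1}$ and $\Tb_{j-\ell-1,h-\ell-1}$, and invoking Lemma~\ref{V1} for the former) and only then relaxes primality to congruences mod $Q$ to obtain $R_{q\bmod Q}(M^n)$, whereas you perform the equivalent decomposition non-inductively by the ``first bad level'' $i$ and relax to congruences at each stage. The bounds you obtain for each piece match the paper's term for term (your sum over $i=1,\dots,n$ of $2\lam(s)^2\er^{i\lam(s)V_{h-1}(s)}(V_{h-1}(s)-V_{j-n}(s))^2$ is exactly what the paper accumulates through the induction), so this is the same argument in different clothing.
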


\begin{proof}
 We'll first show, by induction on $n$, that for any integers $h$ and $j$ satifying $h>j\ge n$,
\begin{multline}\label{vhvj1}
V_h(s) \le V_j(s)+2n\lam(s)^2 \er^{n\lam(s) V_{h-1}(s)} \(V_{h-1}(s)-V_{j-n}(s)\)^2 \\
+\sum_{q_n\in\PP_{h-n}\setminus \PP_{j-n}} \frac{1}{q_n^s} \!
\ssum{m_n\in T_{j-n}\\m_nq_n+1=q_{n-1}\\ q_{n-1}\text{ prime}} \! \frac{1}{m_n^s} \!
\ssum{m_{n-1}\in T_{j-n+1}\\m_{n-1}q_{n-1}+1=q_{n-2}\\q_{n-2}\text{ prime}} \!\! \frac{1}{m_{n-1}^s} \cdots \!\!
 \ssum{\mone \in T_{j-1}\\ \mone\qone+1=\qz\\ \qz\text{ prime}} \frac{1}{m_1^s}.
\end{multline}
To begin the induction, we use \eqref{phth-1} and Lemma \ref{V1} to obtain
\bal
V_h(s) &= V_j(s) + \sum_{\qz-1\in T_{h-1}\setminus T_{j-1}} \frac{1}{(\qz-1)^s} \\
&\le V_j(s) + 2\lam(s)^2 \er^{\lam(s)V_{h-1}(s)}\(V_{h-1}(s)-V_{j-1}(s)\)^2 + \sum_{\qz-1\in\Tb_{j-1,h-1}} \frac{1}{(\qz-1)^s}.
\eal
In the final sum, we may write $\qz=1+\mone \qone$, where $\mone\in T_{j-1}$ and 
$\qone\in \PP_{h-1}\setminus \PP_{j-1}$.  This proves \eqref{vhvj1} when $n=1$.

Now suppose that \eqref{vhvj1} holds for some $n$, and assume that $h>j\ge n+1$. 
In the multiple sum in \eqref{vhvj1}, replace $q_n^{-s}$ with $(q_n-1)^{-s}$ and observe that
 $q_n-1 \in T_{h-n-1} \setminus T_{j-n-1}$.  The contribution to the multiple sum from those
summands with $q_n-1 \in \Tt_{j-n-1,h-n-1}$ is, by Lemma \ref{V1} and \eqref{VhVh-1}, at most
\bal
2\lam(s)^2& \er^{\lam(s) V_{h-n-1}(s)} \( V_{h-n-1}(s)-V_{j-n-1}(s)\)^2 \sum_{m_n\in T_{j-n}} m_n^{-s} \cdots
\sum_{\mone \in T_{j-1}} m_1^{-s} \\
&\le 2\lam(s)^2 \er^{\lam(s) V_{h-1}(s)} \( V_{h-1}(s)-V_{j-n-1}(s)\)^2 \er^{\lam(s) (V_{j-n}(s)+\cdots+V_{j-1}(s))} \\
&\le 2\lam(s)^2 \er^{\lam(s) (n+1) V_{h-1}(s)} \( V_{h-1}(s)-V_{j-n-1}(s)\)^2.
\eal
If $q_n-1\in \Tb_{j-n-1,h-n-1}$, then $q_n=1+q_{n+1}m_{n+1}$, where
$q_{n+1}\in\PP_{h-n-1}\setminus\PP_{j-n-1}$ and $m_{n+1}\in T_{j-n-1}$.  This proves \eqref{vhvj1} with
$n$ replaced by $n+1$.  By induction, \eqref{vhvj1} follows for all $n$.

In \eqref{vhvj1}, we enlarge the range of all sums on $m_i$ to $m_i\in T_{j-1}$.  Also,
for $0\le i\le n-1$,
we relax the condition that $q_i$ is prime to $q_i\equiv a_i\pmod{Q}$, where $a_i\in U_Q$.
Recalling \eqref{MsjQ}, we find that the multiple sum in \eqref{vhvj1} is at most
\bal
&\sum_{q_n\in\PP_{h-n}\setminus\PP_{j-n}} q_n^{-s} \sum_{a_{n-1}\in U_Q} \ssum{m_n\in T_{j-1} \\ m_nq_n+1\equiv a_{n-1}\pmod{Q}} m_n^{-s}
\cdots  \sum_{\az\in U_Q}  \ssum{\mone\in T_{j-1} \\ \mone \aone+1\equiv \az\pmod{Q}} m_1^{-s} \\
&= \sum_{q_n\in\PP_{h-n}\setminus\PP_{j-n}} q_n^{-s} \sum_{a_{n-1},\ldots,\az\in U_Q} M(q_n\!\!\!\!\mod Q,a_{n-1}) M(a_{n-1},a_{n-2}) \cdots
M(\aone,\az) \\ &= \sum_{q_n\in\PP_{h-n}\setminus\PP_{j-n}} q_n^{-s} R_{q_n^{\phantom{a}}\!\!\!\!\!\mod{Q}}(M^n).
\eal
This completes the proof of the lemma.
\end{proof}

Assuming $V_\infty(s)$ exists and $h$ and $j$ are large, $V_{h-1}(s)-V_{j-n}(s)$
will be very small if $j$ is large.  Consequently, of the three terms
on the right side of the inequality in Lemma \ref{V2},
the third may be regarded as ``small'', since it is quadratic in $V_{h-1}(s)-V_{j-n}(s)$.
The second term is at most $(V_{h-1}(s)-V_{j-n}(s)) R(M^n)$, and
can be regarded as larger than the third term.  It can also be made very small, provided that
$M$ is a contracting matrix (all eigenvalues lie inside the unit circle) and $n$ is large enough.
Under this assumption on $M$, it follows that
\[
 V_h(s) \le V_{j}(s) + (V_{h-1}(s)-V_{j-n}(s)) \eps,
\]
where $\eps$ is small.  Iteration of this inequality, with $j$ and $n$ fixed,
then shows that the sequence $V_0(s), V_1(s), \ldots$
is bounded.  The next lemma makes this heuristic precise.

\begin{lem}\label{iterlem}
 Suppose that for some $y$ and for $Q=\prod_{p\le y} p$, $M_{1,\infty,Q}$ is a contracting matrix.
Then for some $s<1$, $V_{\infty}(s)$ is finite.
\end{lem}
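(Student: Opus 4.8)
The plan is to fix the modulus $Q=\prod_{p\le y}p$ from the hypothesis, iterate the inequality of Lemma~\ref{V2} for a well-chosen (and henceforth fixed) number of descended levels $n$ and base level $j$, and run that iteration as an induction on $h$ against a threshold $B$ committed in advance. Write $M=M_{1,\infty,Q}$, which has finite entries by Proposition~\ref{simplesieve}. Since $R(\cdot)$ is the operator norm $\|\cdot\|_\infty$ (hence submultiplicative), $M$ being contracting gives $R(M^n)\to0$ by Gelfand's formula, so I would fix $n\ge1$ with $R(M^n)\le\tfrac18$. I then transfer this smallness to the matrices actually appearing in Lemma~\ref{V2}: for every finite $j$ with $j-n\ge h_0:=\max\{H(p):p\in\PP,\,p\mid Q\}$ the hypothesis of Lemma~\ref{V2} holds, and $T_j\subseteq T_\infty$ gives $M_{1,j,Q}\le M_{1,\infty,Q}=M$ entrywise, so (products of non-negative matrices being entrywise monotone) $R(M_{1,j,Q}^n)\le R(M^n)\le\tfrac18$. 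Also, for fixed $j$ the entries $M_{s,j,Q}(a,b)=\sum_{m\in T_j,\,am\equiv b\pmod{Q}}m^{-s}$ are continuous in $s$ on $(0,1]$, the series being dominated since $\sum_{m\in T_j}m^{-s}=\prod_{p\in\PP_j}(1-p^{-s})^{-1}<\infty$ for all $s>0$ by Lemma~\ref{V0}; hence $s\mapsto R(M_{s,j,Q}^n)$ is continuous, as are $V_j(s)$ and $V_{j-n}(s)$, and $\lambda(s)\to1$ as $s\to1$.

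Next I would pin down the parameters. Put $L=V_\infty(1)=\sum_{p\in\PP}(p-1)^{-1}$, finite by Proposition~\ref{simplesieve}, and $K=8n\,\er^{2n(L+2)}$. Choose $j$ so large that $j-n\ge h_0$ and $V_j(1)-V_{j-n}(1)<\tfrac1{32K}$ (possible since $V_h(1)\uparrow L<\infty$), then choose $s<1$ so close to $1$ that $\lambda(s)<2$, $V_{j-n}(s)<L+1$, $W:=V_j(s)-V_{j-n}(s)<\tfrac1{16K}$, and $\theta:=R(M_{s,j,Q}^n)<\tfrac14$. Let $x$ be the smaller root of $Kx^2-(1-\theta)x+W=0$; the discriminant is positive because $4KW<\tfrac14<(1-\theta)^2$, and one checks $W\le x<3W<1$, whence $B:=V_{j-n}(s)+x$ satisfies $B<L+2$ and therefore $2n\lambda(s)^2\er^{n\lambda(s)B}<8n\,\er^{2n(L+2)}=K$.

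With these choices I would show $V_h(s)\le B$ for all $h$ by induction on $h$. For $h\le j$ this follows from monotonicity of $V_h(s)$ in $h$ together with $V_j(s)=V_{j-n}(s)+W\le V_{j-n}(s)+x=B$. For $h>j$, apply Lemma~\ref{V2} with this $n,Q,j$: bounding each $R_{q\bmod Q}(M_{s,j,Q}^n)$ by $\theta$ and using $\PP_{h-n}\subseteq\PP_{h-1}$ with $q^{-s}\le(q-1)^{-s}$, the middle term is at most $\theta\,(V_{h-1}(s)-V_{j-n}(s))$; by the inductive hypothesis $V_{h-1}(s)\le B$, so the last term is at most $K\,(V_{h-1}(s)-V_{j-n}(s))^2$. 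Setting $W':=V_{h-1}(s)-V_{j-n}(s)\le x$ and using that $t\mapsto W+\theta t+Kt^2$ is increasing,
\[
V_h(s)\le V_{j-n}(s)+W+\theta W'+K(W')^2\le V_{j-n}(s)+W+\theta x+Kx^2=V_{j-n}(s)+x=B,
\]
the final equality being the defining relation for $x$. Hence $V_\infty(s)\le B<\infty$ with $s<1$, as claimed.

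I expect the main obstacle to be making the iteration close on itself. The last term of Lemma~\ref{V2} carries the factor $\er^{n\lambda(s)V_{h-1}(s)}$, so $V_h(s)$ cannot be estimated without already controlling $V_{h-1}(s)$; this is exactly why one must run the induction against a threshold $B$ fixed beforehand and choose the parameters in the order $n$, then $j$, then $s$, then $B$, arranging that $B$ turns out small enough that $\er^{n\lambda(s)B}$ is dominated by the very constant $K$ used to build $B$. A second, milder point is that the contraction hypothesis concerns the idealized matrix $M_{1,\infty,Q}$, whereas Lemma~\ref{V2} uses the finite matrices $M_{s,j,Q}$ with $j<\infty$ and $s<1$; the passage is handled by the entrywise bound $M_{1,j,Q}\le M_{1,\infty,Q}$ and the continuity of $M_{s,j,Q}$ in $s$.
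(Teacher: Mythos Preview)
Your proof is correct and follows essentially the same approach as the paper's: fix $n$ so that $R(M^n)$ is small, then choose $j$ large and $s$ close to $1$ by continuity, and iterate Lemma~\ref{V2} to bound $V_h(s)$ uniformly in $h$. The only cosmetic difference is that the paper packages the iteration as $x_k\le f(x_{k-1})$ and argues via the fixed point of the (slightly more complicated) function $f$, whereas you absorb the exponential into a constant $K$ via the inductive hypothesis and work with the root of an explicit quadratic; you also spell out the Gelfand/entrywise-monotonicity justifications that the paper leaves implicit.
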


\begin{proof}
 By assumption, $R(M_{1,\infty,Q}^n) \le \frac14$ for some $n$.
Let $D=V_\infty(1)$ ($D$ exists by Proposition \ref{simplesieve}) and let
\[
 \eps = \frac{1}{100 n \er^{2n(D+2)}}.
\]
Fix $j$ large enough so that $j>n$, $\PP_{j-n}$ contains all primes in $\PP$ which are $\le y$,
and  $V_j(1) - V_{j-n}(1) \le \eps/2$.
By Lemma \ref{V0}, $V_j(s)$ and $V_{j-n}(s)$ are continuous for $0<s\le 1$, as are 
all entries of $M_{s,j,Q}$.  Note that $R(M_{1,j,Q}^n)\le R(M_{1,\infty,Q}^n) \le \frac14$.
 Therefore, there is an $s\in [0.9,1)$ such that
\begin{itemize}
 \item[(a)] $V_j(s) \le D+1$,
 \item[(b)] $V_{j}(s) - V_{j-n}(s) \le \eps$,
 \item[(c)] $R(M_{s,j,Q}^n) \le \frac13$.
\end{itemize}
Since $s\ge 0.9$, we have $\lam(s)\le 2$.  By Lemma \ref{V2} and (c), for any $h>j$, it follows that
\[
V_h(s) \le V_{j}(s)+\frac13 \( V_{h-1}(s)-V_{j-n}(s)\)+8n\er^{2n V_{h-1}(s)} \( V_{h-1}(s)-V_{j-n}(s)\)^2.
\]
For $k\ge 0$, let $x_k=V_{j+k}(s)-V_j(s)$.  Then $x_0=0$ and, by (a) and (b), for $k\ge 1$ we have
\bal
 x_{k} &\le \frac13 (x_{k-1}+V_{j}(s)-V_{j-n}(s)) + 8n\er^{2n(x_{k-1}+V_j(s))} \( x_{k-1} +V_{j}(s)-V_{j-n}(s)\)^2 \\ 
&\le \frac13 (x_{k-1}+\eps)+8n \er^{2n(D+1+x_{k-1})}(x_{k-1}+\eps)^2 =: f(x_{k-1}).
\eal
We have $f(0)>0$, $f''(x)>0$ for $x>0$ and
\[
 f(\eps) = \frac23 \eps + 8n\er^{2n(D+1+\eps)}(2\eps)^2 < \frac23 \eps + \frac{32}{100}\eps < \eps.
\]
Therefore, $f(x)=x$ has a unique root $\tilde{x} \in (0,\eps)$ and it follows that 
$\lim_{k\to\infty} x_k \le \tilde{x}$.  Consequently, 
\[
V_\infty(s)\le V_{j}(s)+\tilde{x} \le D+1+\eps. \qedhere
\]
\end{proof}

%%%%%%%%%%%%%%%%%%%%%%%%%%%%%%%%%%%%%%%%%%%%%%%%%%%%%%
%
%
\section{Matrix eigenvalues and the proof of Theorem \ref{mainthm}}
%
%
%%%%%%%%%%%%%%%%%%%%%%%%%%%%%%%%%%%%%%%%%%%%%%%%%%%%%%

Throughout this section, we assume that $Q=\prod_{p\le y} p$ and $M=M_{1,\infty,Q}$.  Observe that by Proposition 
\ref{simplesieve},
\be\label{K}
K = \prod_{p\in\PP} \(1-\frac{1}{p} \) \gg 1.
\ee
Because all entries of $M$ are nonnegative, the Perron-Frobenius theorem implies that
there is an eigenvalue of largest modulus which is real and positive.
The matrices $M$ are similar to the matrices studied
in \cite[\S 2]{FKL}, and we will likewise focus on bounding column sums of $M$.
  However, the estimation problem is much more
complicated than the analogous problem in \cite{FKL}.  

\begin{lem}\label{Cb}
 For any $b\in U_Q$, let $d=(b-1,Q)$ and $b'=\frac{b-1}{d}$.  Then
\be\label{CbM}
 C_b(M) = \frac{\phi(d)}{d} \ssum{k\in T_\infty \\ (k,Q/d)=1 \\ \eqref{kb'}} \frac{1}{k},
\ee
where
\be\label{kb'}
\forall p\le y \text{ with } p\not\in\PP, k\not\equiv b'\pmod{p}.
\ee
\end{lem}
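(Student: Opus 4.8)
The plan is to unfold the definition of $C_b(M)$ directly from \eqref{MsjQ} with $s=1$, $j=\infty$, and then reorganize the resulting sum by a substitution that isolates the part of $m$ coprime to $Q$. By definition, $C_b(M) = \sum_{a\in U_Q} M(a,b)$ where $M(a,b) = \sum_{m\in T_\infty,\, am\equiv b\pmod Q} m^{-1}$. Swapping the order of summation, we are counting pairs $(a,m)$ with $a\in U_Q$, $m\in T_\infty$, and $am\equiv b\pmod Q$, weighted by $1/m$. The first step is to understand, for a fixed $m\in T_\infty$, how many residues $a\in U_Q$ satisfy $am\equiv b\pmod Q$. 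Write $g=(m,Q)$; then the congruence $am\equiv b\pmod Q$ is solvable in $a$ only if $g\mid b$, and since $b\in U_Q$ means $(b,Q)=1$, this forces $g=1$, i.e.\ $(m,Q)=1$. In that case $a\equiv bm^{-1}\pmod Q$ is uniquely determined modulo $Q$, and one must check this unique $a$ actually lies in $U_Q$: we have $(a,Q)=1$ automatically, and for $p\mid Q$ with $p\notin\PP$, $a\equiv 1\pmod p$ would give $m\equiv b\pmod p$; but $m\in T_\infty$ is built from primes in $\PP$, and... this is exactly the subtle point.

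Here the factor $\phi(d)/d$ and the splitting $b-1 = d b'$ enter, so the honest approach is to \emph{not} assume $(m,Q)=1$ from the start but rather to factor $m = m_1 k$ where $m_1$ collects the prime factors of $m$ dividing $Q$ and $k$ is coprime to $Q$. Since $m\in T_\infty$, every prime factor of $m_1$ lies in $\PP\cap\{p\le y\}$, hence in $T_\infty$, and conversely any such $m_1$ is admissible. For fixed $k$ with $(k,Q)=1$, the inner sum over $a\in U_Q$ and over admissible $m_1$ with $am_1 k\equiv b\pmod Q$ should be evaluated via the Chinese Remainder Theorem prime-by-prime over $p\le y$: for $p\in\PP$ one sums a geometric series in the exponent of $p$ in $m_1$ together with the free choice of $a\bmod p$, while for $p\notin\PP$ one has $m_1$ coprime to $p$ and must count $a\bmod p$ with $a\not\equiv 1$ and $ak\equiv b\pmod p$. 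The key bookkeeping is that $d=(b-1,Q)$ records exactly which primes $p\mid Q$ have $b\equiv 1\pmod p$: for such $p$ one automatically has $p\in\PP$ (else $b\notin U_Q$), and for the primes $p\notin\PP$ one needs $p\nmid(b-1)$, consistent with the definition of $d$. Carrying out the CRT computation, the local factors at $p\in\PP$ reassemble into $\sum_{m_1}1/m_1$ over $m_1$ supported on $\PP\cap[1,y]$, the local factors at $p\notin\PP$ produce the congruence condition \eqref{kb'} on $k$ together with a uniform weight, and the primes $p\mid d$ contribute the correction $\phi(d)/d$ coming from the fact that the residue $a\equiv 1\pmod p$ is excluded from $U_Q$ precisely at those $p$.

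I would assemble the identity as follows: first reduce to a sum over $k\in T_\infty$ with $(k,Q)=1$ and $m_1$ supported on small primes of $\PP$, using the uniqueness-of-$a$ analysis above; then recognize that the sum over all such $m_1$ of $1/m_1$ telescopes into $\prod_{p\le y,\,p\in\PP}(1-1/p)^{-1}$, but that this product has already been absorbed — the remaining sum $\sum_{k}1/k$ with the stated constraints, times $\phi(d)/d$, is exactly what \eqref{CbM} asserts once one tracks the normalization carefully. The main obstacle is the careful local analysis at the primes $p\mid Q$: one must correctly separate the three regimes ($p\in\PP$; $p\notin\PP$ with $p\nmid(b-1)$; and noting $p\notin\PP$ with $p\mid(b-1)$ is impossible since $b\in U_Q$) and verify that the product of local counts collapses to the clean factor $\phi(d)/d$ with no leftover terms. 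Everything else is a routine manipulation of multiplicative sums.
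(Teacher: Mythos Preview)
Your proposal is tripped up at the very first step by a typo in the displayed definition \eqref{MsjQ}: the congruence should read $am+1\equiv b\pmod Q$ (equivalently $am\equiv b-1\pmod Q$), not $am\equiv b\pmod Q$. This is clear both from how the matrix is actually used in the proof of Lemma~\ref{V2} (where one sums over $m$ with $mq+1\equiv a\pmod Q$) and from the paper's own proof of the present lemma, which opens with ``$am+1\equiv b\pmod Q$ implies that $d\mid m$''. With the congruence as you read it, your first paragraph is entirely correct that $(m,Q)=1$ is forced and $a$ is then unique modulo $Q$; but then the factor $\phi(d)/d$ has no way to appear, and your second paragraph's attempt to factor $m=m_1k$ cannot recover from this, since under $am\equiv b\pmod Q$ with $(b,Q)=1$ the ``$Q$-part'' $m_1$ is forced to equal $1$. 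Your proposal is internally inconsistent at exactly this point: you speak of summing a geometric series in the exponent of $p$ in $m_1$, which is vacuous.

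Once the congruence is corrected to $am\equiv b-1\pmod Q$, the paper's argument is considerably simpler than your proposed prime-by-prime CRT analysis. Since $(a,Q)=1$ and $d\mid(b-1)$, one gets $d\mid m$ immediately; write $m=dk$ (\emph{not} $m=m_1k$ with $m_1$ the full $Q$-part of $m$). Then $ak\equiv b'\pmod{Q/d}$, whence $(k,Q/d)=1$, and for each $p\notin\PP$ the requirement $a\not\equiv 1\pmod p$ translates exactly into $k\not\equiv b'\pmod p$, i.e.\ \eqref{kb'}. Conversely, for each admissible $k$ the residue of $a$ modulo $Q/d$ is uniquely determined and automatically satisfies the $U_Q$ constraints there, while $a\bmod d$ is free subject only to $(a,d)=1$, giving $\phi(d)$ solutions. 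Summing $1/(dk)$ yields \eqref{CbM}. Note that the lemma's $k$ satisfies only $(k,Q/d)=1$ and may well have prime factors dividing $d$; your parametrization with $(k,Q)=1$ is genuinely different, and if you push it through, the exclusion condition at primes $p\notin\PP$ becomes $k\not\equiv (b-1)m_1^{-1}\pmod p$, which depends on $m_1$ and does not separate cleanly into the stated formula without an extra change of variables back to the paper's $k$.
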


\begin{proof}
 By the definition of $U_Q$ in \eqref{UQ}, $2|d$ and for all $p|d$, $p\in \PP$.  In \eqref{MsjQ}, therefore,
 $am+1\equiv b\pmod{Q}$
implies that $d|m$.  Writing $m=dk$, we have $(k,Q/d)=1$ and $ak\equiv b'\pmod{Q/d}$.
Since $a\in U_Q$, $a\not\equiv 1\pmod{p}$ for any $p\le y$ with $p\not\in \PP$.  Hence, \eqref{kb'} holds.
Therefore, by \eqref{MsjQ},
\[
 C_b(M) = \sum_{a\in U_Q} \ssum{k\in T_\infty\\ak\equiv b'\pmod{Q/d}\\ \eqref{kb'}} \frac{1}{dk} = 
\frac{1}{d} \ssum{k\in T_\infty\\ (k,Q/d)=1 \\ \eqref{kb'}} \frac{1}{k} \# \{a\in U_Q : ak\equiv
b' \pmod{Q/d} \}.
\]
For every $k\in T_\infty$ satisfying $(k,Q/d)=1$ and \eqref{kb'}, there is a unique 
solution $a\mod Q/d$ of the congruence $ak\equiv b'\pmod{Q/d}$ and moreover this solutions satisfies
$a\in U_Q$.  Thus, there are $\phi(d)$ solutions 
$a \in U_Q$, and this completes the proof.
\end{proof}

Notice that if we ignore condition \eqref{kb'}, then we obtain from \eqref{CbM} the upper bound
\be\label{crude}
C_b(M) \le \frac{\phi(d)}{d} \ssum{k\in T_\infty \\ (k,Q/d)=1} \frac{1}{k} =
\frac{\phi(d)}{d} \sprod{p\in \PP \\ p|d\text{ or }p>y} \(1-\frac{1}{p}\)^{-1} = \prod_{p\in\PP,p>y}
 \(1-\frac{1}{p}\)^{-1}.
\ee
The product on the far right side of \eqref{crude} is always greater than 1, however it tends to
1 as $y\to\infty$ by Proposition \ref{simplesieve}.
In order to obtain a bound $C(M)<1$, it is necessary to use \eqref{kb'} to eliminate
some numbers $k$ from the sum in \eqref{crude}.  However, if
$k\in T_\infty$ with $(k,Q/d)=1$ and $k<y$, then only primes dividing $d$ may divide $k$.  
In the worst case $d=2$, the only numbers $k<y$ that are available to eliminate are powers of 2.
If there is a prime
$p\not\in\PP$ for which 2 is a primitive root (generator of $(\ZZ/p\ZZ)^*$), then we will succeed.

\begin{lem}\label{2primroot}
 Suppose $p\not\in \PP$ and 2 is a primitive root of $p$.  Then, for large enough $y$ depending on $p$,
\[
 C(M) \le 1 - 2^{1-p} K,
\]
where $K$ is defined in \eqref{K}.
\end{lem}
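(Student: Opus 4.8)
The plan is to start from the exact formula for $C_b(M)$ in Lemma~\ref{Cb} and to improve the crude estimate \eqref{crude} by exhibiting, for every $b\in U_Q$, an explicit family of powers of~$2$ that the condition \eqref{kb'} removes from the sum in \eqref{CbM}; the total reciprocal weight of this family will be bounded below by a constant depending only on~$p$.

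First I would fix $b\in U_Q$, put $d=(b-1,Q)$ and $b'=\frac{b-1}{d}$ as in Lemma~\ref{Cb}, and take $y\ge p$, so that $p\mid Q$. From the proof of Lemma~\ref{Cb} every prime factor of $d$ lies in $\PP$, hence $p\nmid d$ because $p\notin\PP$; and $b\in U_Q$ forces $b\not\equiv 1\pmod p$, so $p\nmid b-1$ and therefore $p\nmid b'$. Thus $b'\bmod p$ is a nonzero residue class, and since $2$ is a primitive root of $p$ we may write $b'\equiv 2^{t}\pmod p$ for some integer $t$ with $0\le t\le p-2$.

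Next I would observe that the integers $k=2^{j}$ with $j\ge 0$ and $j\equiv t\pmod{p-1}$ all lie in $T_\infty$ (since $2\in\PP$), are coprime to $Q/d$ (which is odd, as $Q$ is squarefree and $2\mid d$), and satisfy $k\equiv b'\pmod p$; consequently each such $k$ violates \eqref{kb'} with the quantified prime there taken to be $p$, so it is excluded from the sum in \eqref{CbM}. These $k$ are distinct and their reciprocals sum to $\sum_{i\ge 0}2^{-(t+i(p-1))}=\frac{2^{-t}}{1-2^{1-p}}$. Subtracting this contribution from the bound \eqref{crude} gives
\[
C_b(M)\;\le\;\prod_{\substack{q\in\PP\\ q>y}}\Big(1-\frac1q\Big)^{-1}\;-\;\frac{\phi(d)}{d}\cdot\frac{2^{-t}}{1-2^{1-p}}.
\]
Since the prime factors of $d$ form a subset of $\PP$, we have $\frac{\phi(d)}{d}=\prod_{q\mid d}(1-1/q)\ge K$, and since $t\le p-2$ we have $2^{-t}\ge 2^{2-p}=2\cdot 2^{1-p}$; hence the subtracted term is at least $2\cdot 2^{1-p}K$. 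By Proposition~\ref{simplesieve} the product $\prod_{q\in\PP,\,q>y}(1-1/q)^{-1}$ tends to $1$ as $y\to\infty$, so for $y$ large enough in terms of $p$ it is at most $1+2^{1-p}K$, and hence $C_b(M)\le 1-2^{1-p}K$. Finally $C_b(M)=0$ whenever $b\notin U_Q$ by \eqref{MsjQ}, so taking the maximum over all $b$ yields $C(M)\le 1-2^{1-p}K$.

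The only point requiring genuine care is the bookkeeping that guarantees $p\nmid b'$ and that the subtracted powers of~$2$ really do lie in $T_\infty$ and are coprime to $Q/d$; once that is in place, the primitive-root hypothesis does precisely what is needed, since it ensures that some power of $2$ is congruent to $b'$ modulo $p$, so that a full geometric progression of inadmissible $k$ is always available to remove from the crude sum. In this sense the lemma itself has no real obstacle; the substantive difficulty of the section will be to obtain an analogous saving when $2$ is not a primitive root of the omitted prime, which is where input from the circle of ideas around Artin's primitive root conjecture enters.
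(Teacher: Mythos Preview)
Your proof is correct and follows the same approach as the paper's, which likewise exhibits a power of $2$ excluded by \eqref{kb'} and subtracts its reciprocal from the crude bound \eqref{crude}. The only cosmetic difference is that the paper removes the single term $k=2^{\theta}$ rather than the full geometric progression $\{2^{t+i(p-1)}:i\ge 0\}$; since you then bound the geometric series below by its first term $2^{-t}\ge 2^{2-p}$, the two arguments are in effect identical.
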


\begin{proof}
 For any $b\in U_Q$, let $d=(b-1,Q)$ and $b'=\frac{b-1}{d}$ as before.  Since $b\not\equiv 1\pmod{p}$,
 we have $(b',p)=1$.  Hence, there is an exponent $\theta\in\{0,1,\ldots,p-2\}$ such that $2^\theta\equiv b'\pmod{p}$.
By Lemma \ref{Cb} and \eqref{crude},
\[
 C_b(M)\le \frac{\phi(d)}{d} \Bigg[ \ssum{k\in T_\infty \\ (k,Q/d)=1} \frac{1}{k}
 - \frac{1}{2^\theta} \Bigg] \le \sprod{p\in\PP \\ p>y}  \(1-\frac{1}{p}\)^{-1} - 2^{2-p} \frac{\phi(d)}{d}.
\]
The lemma follows upon observing that
\[
 \inf_{d\in T_\infty} \frac{\phi(d)}{d} = K
\]
and that if $y$ is large then
\[
 \sprod{p\in\PP \\ p>y}  \(1-\frac{1}{p}\)^{-1} \le 1 + 2^{1-p} K. \qedhere
\]
\end{proof}

\textbf{Remarks.}  It is conjectured that there are infinitely many primes that have 2 as a 
primitive root, but this is an open problem.   Hooley \cite{Ho}
showed that the Riemann Hypothesis for the Dedekind zeta functions $\zeta_{K_r}(s)$ for the number fields 
$K_r=\QQ(2^{1/r},\er^{2\pi i/r})$, where $r$ runs over the primes, implies that the number of
primes $p\le x$ which have 2 as a primitive root is $\sim cx/\log x$, where 
$c=\prod_r ( 1 - \frac{1}{r(r-1)} ) = 0.3739\ldots$.  This asymptotic formula is known
as Artin's primitive root conjecture for the base 2.  If true, then by Proposition \ref{simplesieve},
most of these primes are not in $\PP$, and we obtain Theorem \ref{mainthm} upon invoking Lemma \ref{2primroot}.
For more about Artin's conjecture, the reader may consult the comprehensive survey article
\cite{M}.

Unconditionally, Lemmas \ref{iterlem} and \ref{2primroot} imply Theorem 1 
in the case that 2 is a primitive root of $\pz$ ($\pz\in\{3,5,11,13,19,\ldots\}$), or if there is a prime
$q\equiv 1\pmod{\pz}$ with 2 as a primitive root; for example if $\pz=7$ then we may take $q=29$.

\bigskip

There is a way around invoking Artin's conjecture: by examining column sums of small powers of $M$,
we succeed if there is a prime $p\not\in\PP$ with $(\ZZ/p\ZZ)^*$ generated by a bounded set of small primes.
The following result of  Gupta and Murty \cite{GM} supplies us with the necessary prime $p$.

\begin{lem}\label{GuptaMurty}
 For $\gg x/\log^2 x$ primes $p\le x$, $(\ZZ/p\ZZ)^*$ is generated by 2, 3 and 5.
\end{lem}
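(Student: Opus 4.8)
The statement to prove is Lemma~\ref{GuptaMurty}: for $\gg x/\log^2 x$ primes $p \le x$, the group $(\ZZ/p\ZZ)^*$ is generated by $2$, $3$ and $5$. The plan is to follow the Gupta--Murty method for producing many primes with small prescribed generators, which upgrades the trivial observation that $2,3,5$ generate $(\ZZ/p\ZZ)^*$ whenever $p-1$ is forced to have no ``bad'' prime factor. More precisely, the subgroup $\langle 2,3,5\rangle$ of $(\ZZ/p\ZZ)^*$ has index dividing $\prod_{\ell} \ell^{a_\ell}$, where $\ell$ ranges over primes for which $2$, $3$ and $5$ are \emph{not} independent modulo $\ell$-th powers in the relevant sense; the key point is that the only obstruction to the full group being generated comes from primes $\ell$ with $\ell^2 \mid p-1$ together with certain splitting conditions in the fields $\QQ(\zeta_\ell, 2^{1/\ell}, 3^{1/\ell}, 5^{1/\ell})$.

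First I would set up the sieve: count primes $p \le x$ such that $p - 1 = 2m$ with $m$ squarefree (or nearly so) and such that, for each prime $\ell \mid m$, $p$ is \emph{not} split completely in $K_\ell := \QQ(\zeta_\ell, 2^{1/\ell}, 3^{1/\ell}, 5^{1/\ell})$. For such $p$ one checks directly that $\langle 2,3,5 \rangle = (\ZZ/p\ZZ)^*$: if not, there is a prime $\ell \mid [(\ZZ/p\ZZ)^* : \langle 2,3,5\rangle]$, which forces $\ell \mid p-1$ and (since $m$ is squarefree and $\ell > 2$, or via a direct Kummer-theory argument) that $2$, $3$, $5$ are all $\ell$-th powers mod $p$, i.e.\ $p$ splits completely in $K_\ell$ — contradiction. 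The degree of $K_\ell$ over $\QQ$ is $\ell(\ell-1)^3$ for $\ell > 5$ (the three radicals are independent in $\QQ^*/(\QQ^*)^\ell$ once $\ell \nmid 30$, give or take the handful of small cases), so the density of primes splitting completely is $\asymp \ell^{-4}$.

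The heart of the argument is then a lower-bound sieve (Chebyshev–Hooley style, or a weighted sieve of the type used by Gupta–Murty) to show that the count of $p \le x$ surviving all these conditions is $\gg x/\log^2 x$. One handles the ``small'' primes $\ell \le z$ by inclusion–exclusion using the effective Chebotarev density theorem for the fields $K_\ell$ (in the range where the error terms are controlled, $z$ a small power of $\log x$), and the ``large'' primes $\ell > z$ by a crude upper bound: the number of $p \le x$ with $\ell^2 \mid p-1$ for some $\ell > z$ is $\ll x \sum_{\ell > z} \ell^{-2} \ll x/(z\log z)$, which is negligible provided $z \to \infty$, and similarly the number of $p$ split completely in some $K_\ell$ with $z < \ell \le x^{o(1)}$ is controlled by $\sum_\ell \ell^{-4} \cdot x/\log x$. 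Assembling these, the surviving count is $\ge (c_0 + o(1)) \, x/\log^2 x$ for a positive constant $c_0 = \prod_\ell(1 - (\text{local density})) \cdot (\text{density of }p\text{ with }p-1 = 2\cdot\text{squarefree})$, all of which are positive.

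The main obstacle is the uniformity in $\ell$ in the application of the effective Chebotarev density theorem to the tower of fields $K_\ell$: one needs an error term in the prime-counting function for $K_\ell$ that is summable over $\ell$ in the relevant range, and the naive GRH-free Chebotarev bounds carry a possible Siegel-zero term and a dependence on the discriminant of $K_\ell$ (which grows with $\ell$). The standard resolution — and the content of the Gupta--Murty argument — is that one does \emph{not} need an asymptotic for each individual $\ell$ but only needs (i) an asymptotic, valid uniformly, for the small primes $\ell \le z$ with $z$ a sufficiently slowly growing function of $\log x$, and (ii) upper bounds of Brun–Titchmarsh quality for the larger $\ell$, which are unconditional and summable. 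Balancing $z$ against the error terms, and checking the exceptional small values $\ell \in \{2,3,5\}$ by hand (where $2,3,5$ may fail to be multiplicatively independent mod $\ell$), yields the claimed lower bound $\gg x/\log^2 x$. Since the rest of the paper only uses the \emph{existence} of one such prime $p \notin \PP$ (combined with Proposition~\ref{simplesieve}, which guarantees $\PP$ omits almost all of these primes), the precise value of the constant is immaterial.
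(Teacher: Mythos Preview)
The paper does not prove this lemma at all: it is quoted verbatim as a result of Gupta and Murty \cite{GM} and used as a black box. So there is no ``paper's own proof'' to compare against; your proposal is an attempt to reconstruct the Gupta--Murty argument itself.

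Your sketch, however, misses the key unconditional idea of \cite{GM} and, as written, would only work under GRH (i.e.\ it is essentially Hooley's conditional argument \cite{Ho} with three radicals instead of one). The problem is your handling of the large moduli. You assert that the number of $p\le x$ splitting completely in $K_\ell$ is $\ll x/(\ell^4\log x)$ uniformly for $z<\ell\le x^{o(1)}$, but unconditionally the effective Chebotarev theorem only gives this for $\ell$ up to a fixed power of $\log x$; beyond that range you have nothing better than Brun--Titchmarsh, which yields $\ll x/(\ell\log(x/\ell))$ and is not summable. Moreover, you say nothing at all about primes $\ell\mid p-1$ with $\ell>x^{o(1)}$: the squarefreeness of $(p-1)/2$ does not prevent $p-1$ from having a large prime factor $\ell$ at which $p$ splits completely in $K_\ell$, and your ``$\ell^2\mid p-1$'' bound is irrelevant to this case.

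What Gupta and Murty actually do is bypass the large-$\ell$ Chebotarev problem via a pigeonhole argument on the rank-$3$ group $\Gamma=\langle 2,3,5\rangle\subset\QQ^\times$: if $|\Gamma_p|\le y$ then two of the $\asymp y$ monomials $2^{e_1}3^{e_2}5^{e_3}$ with $0\le e_i<y^{1/3}$ coincide modulo $p$, so $p$ divides one of $O(y)$ integers each of size $\le 30^{y^{1/3}}$, giving at most $O(y^{4/3})$ such primes. Taking $y$ a suitable power of $x$, one concludes that for almost all $p\le x$ the index $(p-1)/|\Gamma_p|$ is at most $x/y$, hence any obstructing prime $\ell$ lies below $x/y$; this range is then handled by a combination of sieve lower bounds (for primes $p$ with $p-1$ free of small prime factors) and Brun--Titchmarsh/Bombieri--Vinogradov type upper bounds. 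That pigeonhole step --- which is precisely why three generators suffice where one does not --- is absent from your sketch. (A minor slip: $[K_\ell:\QQ]=(\ell-1)\ell^3$, not $\ell(\ell-1)^3$, though your $\asymp\ell^{-4}$ density is unaffected.)
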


\textbf{Remarks.}  Heath-Brown \cite{HB} proved the stronger statement that for  $\gg x/\log^2 x$ primes $p\le x$,
either 2, 3 or 5 is a primitive root of $p$.  
Our argument below, in fact, requires only the weaker statement that for some $k$ and primes
$\pone,\ldots,p_k$, each with 2 as a primitive root, there are $\gg x/\log^2 x$ primes $p\le x$
for which $(\ZZ/p\ZZ)^*$ is generated by $2,\pone,\ldots,p_k$.
We would then iterate Lemma \ref{CbMk} below $k$ times instead of twice.

\bigskip

Utilizing Lemma \ref{GuptaMurty}, we will show that $C(M^3)<1$ for large $y$.
Our main tool is the following, which roughly says that if $C_b(M^k)<1$ for every $b$
lying in some  arithmetic progression, then $C_b(M^{k+1})<1$ for all $b$ lying in a larger arithmetic progression.

\begin{lem}\label{CbMk}
 Let $p$ be a prime in $\PP$ with 2 as a primitive root, and let $n\in T_\infty$ satisfy $n|Q$ and $p\nmid n$.
Let $u\in \NN$.
Suppose that for large $y$ and for all $b\equiv 1\pmod{pn}$, $C_b(M^u)\le 1-\delta$ where $\del>0$.
Then, for large enough $y$ (depending on $p, n, \del, \pz, u$) and all $b\equiv 1\pmod{n}$,
$C_b(M^{u+1}) \le 1-\del'$, where
\[
 \del' = \frac{\del K}{2^p_{\phantom{a}} n}.
\]
\end{lem}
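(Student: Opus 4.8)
The plan is to express $C_b(M^{u+1})$ through column sums of $M^u$ and a ``seed'' sum $A$, and then to evaluate $A$ by a refinement of the computation in Lemma~\ref{Cb}. Since $M^{u+1}=M^uM$,
\[
C_b(M^{u+1})=\sum_{c\in U_Q}C_c(M^u)M(c,b).
\]
To the terms with $c\equiv1\pmod{pn}$ we apply the hypothesis $C_c(M^u)\le1-\del$; to the rest we apply the crude bound $C_c(M^u)\le C(M^u)\le C(M)^u$, together with $C(M)\le1+\eta$ from \eqref{crude}, where $\eta=\eta(y)\to0$ by Proposition~\ref{simplesieve}. With $A=\sum_{c\in U_Q,\ c\equiv1(pn)}M(c,b)$ and $\sum_{c\in U_Q}M(c,b)=C_b(M)$ this gives
\[
C_b(M^{u+1})\le(1+\eta)C_b(M)-\del A,\qquad C_b(M)\le1+\eta,
\]
so it suffices to bound $A$ suitably from below. (We take $y$ large enough that $p\le y$, so $pn\mid Q$ and the hypothesis is available.)

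To evaluate $A$, assume $b\in U_Q$ (otherwise $C_b(M^{u+1})=0$); then $b$ is odd, $d:=(b-1,Q)\in T_\infty$ is even, $b':=(b-1)/d$ is coprime to $Q/d$, and $n\mid d$ since $n\mid Q$ and (as $b\equiv1\pmod n$) $n\mid b-1$. Exactly as in Lemma~\ref{Cb}, $cm\equiv b-1\pmod Q$ forces $d\mid m$; writing $m=dk$ and counting, for each $k$, the $c\in U_Q$ with $c\equiv1\pmod{pn}$ in the forced class mod $Q/d$, one gets two cases. If $p\mid b-1$ then $p\mid d$, hence $pn\mid d$ (as $p\nmid n$); the congruences mod $pn$ and mod $Q/d$ are independent and the count equals $\phi(d)/((p-1)\phi(n))$ when $k$ satisfies \eqref{kb'} (and $0$ otherwise), which with Lemma~\ref{Cb} yields the identity $A=C_b(M)/((p-1)\phi(n))$. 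If $p\nmid b-1$ then $p\nmid d$, $p\mid Q/d$; compatibility modulo $p$ forces $k\equiv b'\pmod p$, the count equals $\phi(d)/\phi(n)$ when $k\equiv b'\pmod p$ and \eqref{kb'} hold (and $0$ otherwise), so
\[
A=\frac{\phi(d)}{d\,\phi(n)}\,\Sigma',\qquad
\Sigma':=\ssum{k\in T_\infty,\ (k,Q/d)=1\\ k\equiv b'\pmod p,\ \eqref{kb'}}\frac1k.
\]
Throughout one uses $\phi(d)/d\ge K$ (the infimum from the proof of Lemma~\ref{2primroot}) and $\phi(n)\le n$.

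It remains to close each case. When $p\mid b-1$, the identity for $A$ gives $C_b(M^{u+1})\le C_b(M)\bigl(1+\eta-\tfrac{\del}{(p-1)\phi(n)}\bigr)$; since $C_b(M)\le1+\eta$ and $\tfrac{2^p}{p-1}>1\ge K$ for every odd prime $p$, this is $\le1-\del'$ once $\eta$ is small. When $p\nmid b-1$, let $S_{b'}$ be $\Sigma'$ with the condition \eqref{kb'} dropped and let $S_{\mathrm{full}}=\sum_{k\in T_\infty,\ (k,Q/d)=1}k^{-1}=\tfrac{d}{\phi(d)}(1+\eta')$ with $\eta'\to0$. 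Because $2$ is a primitive root of $p$ and $2\mid d$, the single term $k=2^{\theta_0}$ with $2^{\theta_0}\equiv b'\pmod p$ and $0\le\theta_0\le p-2$ gives $S_{b'}\ge2^{2-p}$. If $C_b(M)\le1-2K2^{-p}$ then $C_b(M^{u+1})\le(1+\eta)(1-2K2^{-p})\le1-\del'$ for $y$ large. Otherwise $\Sigma:=\tfrac{d}{\phi(d)}C_b(M)$ lies within $\tfrac{d}{\phi(d)}(2K2^{-p}+\eta')$ of $S_{\mathrm{full}}$, so the excluded mass $S_{\mathrm{full}}-\Sigma$ is at most $2^{1-p}+\eta'/K$; hence $\Sigma'\ge S_{b'}-(S_{\mathrm{full}}-\Sigma)\ge2^{1-p}-\eta'/K$, whence $A\ge\tfrac{2K}{2^pn}-\tfrac{\eta'}{n}$, i.e.\ $\del A\ge2\del'-o(1)$, and $C_b(M^{u+1})\le(1+\eta)^2-\del A\le1-\del'$ for $y$ large. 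In every estimate the threshold on $y$ depends only on $p,n,\del,u$ and a lower bound for $K$, which depends only on $\pz$.

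I expect the main obstacle to be the lower bound for $A$ when $p\nmid b-1$, i.e.\ showing $\Sigma'$ is not too small. One cannot simply exhibit a small admissible $k$: the candidate $k=2^{\theta_0}$ may be removed by the sieve condition \eqref{kb'}, and for adversarial $b$ \emph{every} small admissible $k$ in the class $b'\bmod p$ can be removed, so $\Sigma'$ has no uniform positive lower bound. The escape is the dichotomy above: removing enough mass to shrink $\Sigma'$ also shrinks $C_b(M)=\tfrac{\phi(d)}{d}\Sigma$, and then the crude bound $C_b(M^{u+1})\le(1+\eta)C_b(M)$ already suffices. The fussiest routine step will be the exact count of $c\in U_Q$ in a fixed residue class mod $Q/d$ under the extra constraint $c\equiv1\pmod{pn}$ — keeping track of which primes dividing $d$ stay free, and checking that the forced congruences never make $c\equiv1$ modulo a prime outside $\PP$.
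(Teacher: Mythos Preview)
Your argument is correct (modulo a harmless slip: the factor should be $(1+\eta)^u$, not $(1+\eta)$, in your basic inequality, and similarly at the end), but it is more elaborate than the paper's proof in both cases.

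When $p\mid b-1$, you compute $A=C_b(M)/((p-1)\phi(n))$ and feed it back into the bound. The paper simply observes that $b\equiv 1\pmod{pn}$ already, so the hypothesis gives $C_b(M^u)\le 1-\delta$ directly, and then $C_b(M^{u+1})\le C(M)\,C_b(M^u)\le (1+\eta)(1-\delta)$ finishes in one line.

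When $p\nmid b-1$, you correctly identify the obstacle—that $k=2^{\theta_0}$ may be killed by condition~\eqref{kb'}—and resolve it by a dichotomy: either $C_b(M)$ is already small, or the total mass removed by \eqref{kb'} is small, forcing $\Sigma'$ to retain most of $S_{b'}$. The paper avoids this dichotomy by a cleaner device: it first \emph{enlarges} the $k$-sum by dropping \eqref{kb'} altogether (valid since all terms are nonnegative), so that the special term $k=2^\theta$ is guaranteed to be present, and only then splits off that single term with the improved bound $C_a(M^u)\le 1-\delta$ for the $\phi(d/n)$ values of $a\equiv 1\pmod{n}$ (which are automatically $\equiv 1\pmod{p}$ because $2^\theta\equiv b'\pmod p$). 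If $2^\theta$ happened to fail \eqref{kb'}, the paper's displayed bound is still an upper bound, since one has merely added the nonnegative quantity $(1-\delta)\cdot(\text{special term})$. This trick of relaxing the constraint before extracting the gain is what lets the paper bypass your dichotomy entirely; your route works but costs an extra case analysis.
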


\begin{proof}
 Suppose that $b\in U_Q$ with $b\equiv 1\pmod{n}$.  If $p|(b-1)$, we apply \eqref{crude} 
and the general inequality $C_b(AB)\le C(A) C_b(B)$ to obtain
\[
 C_b(M^{u+1}) \le C_b(M^u) C(M) \le (1-\del) \prod_{p\in\PP,p>y}
 \(1-\frac{1}{p}\)^{-1} \le 1 - \frac{\del}{2} \le 1 - \del'
\]
if $y$ is large enough.  Now assume $p\nmid (b-1)$.  As in Lemma \ref{Cb}, put 
$ d = (b-1,Q)$ and $b' = \frac{b-1}{d}.$  We have
\be\label{Cak}
\begin{split}
 C_b(M^{u+1}) &= \sum_{a\in U_Q} C_a(M^u) M(a,b) \\
&= \sum_{a\in U_Q} C_a(M^u) \ssum{m\in T_\infty \\ am+1\equiv b\pmod{Q}} \frac{1}{m} \\
&= \frac1d \ssum{k\in T_\infty \\ (k,Q/d)=1 \\ \eqref{kb'}} \frac{1}{k} \ssum{a\in U_Q \\ ak\equiv b'\pmod{Q/d} \\
} C_a(M^u).
\end{split}\ee
For each $k$, the congruence $ak\equiv b'\pmod{Q/d}$ has a unique solution $a\mod Q/d$, hence there are
$\phi(d)$ solutions $a\in U_Q$.
By assumption, there is a $\theta\in\{0,1,\ldots,p-2\}$ with $2^\theta\equiv b'\pmod{p}$.  
In \eqref{Cak}, we use the crude bound $C_a(M^u)\le C(M^u)\le C(M)^u$ for all pairs $a,k$ except
when both $k=2^\theta$ and $a\equiv 1\pmod{n}$. In the latter case, $a\equiv 1\pmod{p}$ as well,
hence $a\equiv 1\pmod{pn}$ and $C_a(M^u) \le 1-\del$.   Also, since $n|Q$ and $b\equiv 1\pmod{n}$,
we have $n|d$.  By \eqref{crude} and \eqref{Cak}, 
\bal
C_b(M^{u+1})&\le C(M)^u \Bigg[ \frac{\phi(d)}{d} \ssum{k\in T_\infty \\ (k,Q/d)=1} \frac{1}{k} -
\frac{1}{2^\theta d} \ssum{a\in U_Q \\ a\equiv 1\pmod{nQ/d}} 1 \Bigg] + 
\frac{1}{2^\theta d} \ssum{a\in U_Q \\ a\equiv 1\pmod{nQ/d}} (1-\del) \\
&\le \max\(1, C(M)^u\) \frac{\phi(d)}{d} \ssum{k\in T_\infty \\ (k,Q/d)=1} \frac{1}{k}
-\frac{\del}{2^\theta d} \phi\pfrac{d}{n}\\
&\le \prod_{p\in\PP,p>y} \(1-\frac{1}{p}\)^{-(u+1)} - \frac{\del}{2^{p-2}_{\phantom{a}} d} \phi\pfrac{d}{n}.
\eal
Since $\phi(d/n) \ge \phi(d)/n$ and $\phi(d)/d \ge K$, upon recalling the definition of $\del'$
we conclude that
\[
C_b(M^{u+1}) \le \prod_{p\in\PP,p>y} \(1-\frac{1}{p}\)^{-(u+1)} - \frac{\del K}{2^{p-2}_{\phantom{a}}n} \le 1-\del'
\]
if $y$ is large enough.
\end{proof}

\begin{proof}[Proof of Theorem \ref{mainthm}]
If $p_0\in\{3,5\}$, Lemma \ref{2primroot} (with $p=p_0$) implies that $C(M)<1$ for large enough $y$.
Hence, by Lemma \ref{iterlem}, $V_\infty(s)$ is finite for some $s<1$.

\newcommand{\numexp}[1]{\ensuremath{#1_{\phantom{a}}^{a_#1^{\phantom{a}}}}}
\newcommand{\thirty}{\ensuremath{\numexp{2}\numexp{3}\numexp{5}}}

Now assume that $3\in\PP$ and $5\in\PP$.
Combining Lemmas \ref{sieve3} and \ref{GuptaMurty}, we find that there
is a prime $\pone\not\in \PP$ for which 2,3 and 5 generate $(\ZZ/\pone\ZZ)^*$.
Following the proof of Lemma \ref{2primroot}, for any $b\in U_Q$ with $b\equiv 1\pmod{30}$, there are
exponents $a_2^{\phantom{a}},a_3^{\phantom{a}},a_5^{\phantom{a}}\in\{0,1,\ldots,\pone-2\}$ so that 
$\thirty\equiv b'\pmod{\pone}$.  As before, $b'=\frac{b-1}{(b-1,Q)}$.
By \eqref{kb'}, $k=\thirty$ is excluded from the sum in \eqref{CbM}.  By 
\eqref{crude}, if $y$ is large enough then
\bal
C_b(M) \le \prod_{p\in\PP,p>y} \(1-\frac{1}{p}\)^{-1} - \frac{\phi(d)/d}{\thirty}
\le \prod_{p\in\PP,p>y} \(1-\frac{1}{p}\)^{-1} - \frac{K}{30^{\pone-2}_{\phantom{1}}} < 1 - \del,
\eal
where $\del = K/30^{p_1^{\phantom{a}}-1}_{\phantom{1}}$.  By Lemma \ref{CbMk} with $n=6$, $p=5$
and $u=1$, we find that for large enough $y$, $C_b(M^2)\le 1-\del'$ for every $b\equiv 1\pmod{6}$, where
\[
 \del' = \frac{K \del}{2^5\cdot 6}.
\]
A second application of Lemma \ref{CbMk}, with $n=2$, $p=3$ and $u=2$ implies that for \emph{every} 
$b\in U_Q$, $C_b(M^3)\le 1-\del''$ if $y$ is large enough, where $\del''=K \del'/16$.  Thus, the 
dominant eigenvalue of $M^3$ is
at most $1-\del''$, hence the dominant eigenvalue of $M$ is $\le (1-\del'')^{1/3} < 1$. 
 Finally, applying Lemma \ref{iterlem}, we find that $V_\infty(s)$ is finite for some $s<1$.
It follows immediately that $P(x)=O(X^s)$.
\end{proof}

\bigskip

\noindent
{\bf Acknowledgements.}  The author thanks Paul Pollack for helpful conversations concerning
Lemma \ref{sieve3}, and is thankful to Paul Bateman for introducing to him Carmichael's conjecture and
related problems.

The author's research was supported by National Science Foundation Grants DMS-0901339
and DMS-1201442.  Much of the work was accomplished while the author attended the N.S.F. supported 
Workshop in Linear Analysis and Probability at Texas A\&M University, July-August, 2012.

%%%%%%%%%%%%%%%%%%%%%%%%%

\end{document}